\documentclass[12pt]{article}
\usepackage{amsmath}
\usepackage{latexsym}
\usepackage{amssymb}
%
%
\newtheorem{thm}{Theorem}[section]
\newtheorem{la}[thm]{Lemma}
\newtheorem{Defn}[thm]{Definition}
\newtheorem{Problm}[thm]{Problem}
\newtheorem{Remark}[thm]{Remark}
\newtheorem{prop}[thm]{Proposition}

\newtheorem{Example}[thm]{Example}
\newtheorem{Number}[thm]{\!\!}
\newenvironment{defn}{\begin{Defn}\rm}{\end{Defn}}
\newenvironment{problem}{\begin{Problm}\rm}{\end{Problm}}
\newenvironment{example}{\begin{Example}\rm}{\end{Example}}
\newenvironment{rem}{\begin{Remark}\rm}{\end{Remark}}
\newenvironment{numba}{\begin{Number}\rm}{\end{Number}}
\newenvironment{proof}{{\noindent\bf Proof.}}%
                  {\nopagebreak\hspace*{\fill}$\Box$\medskip\medskip\par}   
\newcommand{\Punkt}{\nopagebreak\hspace*{\fill}$\Box$}
\newcommand{\wb}{\overline}
\newcommand{\ve}{\varepsilon}
\newcommand{\at}{\symbol{'100}}

\newcommand{\tensor}{\otimes}

\newcommand{\mto}{\mapsto}
\newcommand{\isom}{\cong}
\newcommand{\N}{{\mathbb N}}

\newcommand{\K}{{\mathbb K}}
\newcommand{\F}{{\mathbb F}}
\newcommand{\C}{{\mathbb C}}
\newcommand{\bL}{{\mathbb L}}
\newcommand{\R}{{\mathbb R}}
\newcommand{\Q}{{\mathbb Q}}
\newcommand{\Z}{{\mathbb Z}}

\newcommand{\cg}{{\mathfrak g}}

\DeclareMathOperator{\Aut}{Aut}

\newcommand{\one}{{\bf 1}}
\newcommand{\sub}{\subseteq}

\DeclareMathOperator{\id}{id}

\newcommand{\sbull}{{\scriptscriptstyle \bullet}}

\DeclareMathOperator{\tor}{tor}

\DeclareMathOperator{\car}{char}
\DeclareMathOperator{\op}{op}

\newcommand{\semid}{\mbox{$\times\!$\rule{.15 mm}{2.07 mm}}}
\begin{document}
\begin{center}
{\Large\bf
Contractible Lie Groups over
Local Fields}\\[6mm]
\renewcommand{\thefootnote}{\fnsymbol{footnote}}
{\bf Helge Gl\"{o}ckner\footnote{Research
supported by the German Research Foundation
(DFG), grant GL 357/6-1}}\vspace{4mm}
\end{center}
\renewcommand{\thefootnote}{\arabic{footnote}}
\setcounter{footnote}{0}
\begin{abstract}\vspace{1mm}
\hspace*{-7.2 mm}
Let $G$ be a Lie group over a local field
of characteristic~$p>0$
which admits a contractive automorphism
$\alpha\colon G\to G$
(i.e., $\alpha^n(x)\to 1$ as $n\to\infty$,
for each $x\in G$).
We show that
$G$ is a torsion group of finite exponent
and nilpotent.
We also obtain results
concerning the interplay between
contractive automorphisms of Lie groups
over local fields,
contractive automorphisms of their Lie algebras,
and positive gradations thereon.
Some of the results even extend
to Lie groups over arbitrary
complete ultrametric fields.\vspace{3mm}
\end{abstract}
{\footnotesize {\em Classification}:
22E20 (primary), 
20E15,           
20E36,           
26E30,           
37D10\\[2.5mm]   
{\em Key words}:
Lie group, local field, ultrametric field, positive characteristic,
contraction group, contractive automorphism,
contractible group,
torsion group,
positive gradation,
nilpotent group,
stable manifold,
Lie subgroup, composition series}\vspace{5mm}
\begin{center}
{\bf\Large Introduction}
\end{center}
A \emph{contraction group} is a pair
$(G,\alpha)$ of a topological group~$G$
and a (bicontinuous) automorphism
$\alpha\colon G\to G$ which
is \emph{contractive} in the sense
that $\alpha^n(x)\to 1$ as $n\to\infty$,
for each $x\in G$.
It is known from the work of Siebert~\cite{Sie}
that each locally compact contraction group
is a direct product
$G=G_0\times H$
of its identity component~$G_0$
and an $\alpha$-stable totally
disconnected group~$H$.
Siebert also showed that~$G_0$
is a simply connected, nilpotent
real Lie group.
Results concerning the totally disconnected part~$H$
were obtained in~\cite{SIM}.
It is a direct product
\[
H\;=\; H_{p_1}\times\cdots \times H_{p_n}\times \tor(H)
\]
of its subgroup $\tor(H)$ of torsion elements
and certain $\alpha$-stable $p$-adic Lie groups~$H_p$.
Thus $p$-adic contraction groups
are among the basic building blocks
of general contraction groups,
and it is therefore well motivated
to study these, and more generally
contraction groups which are
(finite-dimensional) Lie groups over local fields.
Essential structural information
concerning $p$-adic contraction groups
was obtained by
Wang~\cite{Wan}:
He showed that any such is a unipotent algebraic
group defined over~$\Q_p$
(and hence nilpotent).
The main goal of the current article
is to shed light on
contraction groups
which are Lie groups
over local fields of positive characteristic.\\[3.5mm]
{\bf Theorem A.}\,
\emph{Let $G$ be a $C^1$-Lie group
over a local field~$\K$ of characteristic $p>0$
which admits a contractive $C^1$-automorphism
$\alpha\colon G\to G$.
Then~$G$ is a torsion group
of finite exponent
and solvable.
Furthermore, there exists a series}
%
\begin{equation}\label{seon}
\one \, =\, G_0 \, \triangleleft \, G_1\, \triangleleft\,
\cdots\, \triangleleft\,  G_n\, =\, G
\end{equation}
\emph{of $\alpha$-stable, closed subgroups~$G_j$
such that the contraction group
$G_j/G_{j-1}$ is isomorphic to
$C_p^{(-\N)}\times C_p^{\N_0}$
with the right shift, for each $j\in \{1,\ldots, n\}$.}\\[3.5mm]
Here $C_p$ is the cyclic group of order~$p$
and $C_p^{(-\N)}\times C_p^{\N_0}$ the
restricted product
of all functions $f\colon \Z\to C_p$
such that $f(n)=1$ for $n$ below
some $n_0$, with the infinite power
$C_p^{\N_0}$ as a compact
open subgroup. The right shift~$\sigma$ is defined
via $\sigma(f)(n):=f(n-1)$,
and a morphism between
contraction groups $(G_1,\alpha_1)$ and
$(G_2,\alpha_2)$ is a continuous homomorphism
$\phi\colon G_1\to G_2$ such that
$\alpha_2 \circ \phi=\phi\circ\alpha_1$.
The series~(\ref{seon})
is a composition series of topological
$\langle\alpha\rangle$-groups (in the sense of~\cite{SIM}).\\[2.5mm]
We are mainly interested in $\K$-analytic ($C^\omega$-)
Lie groups and $\K$-analytic automorphisms,
but the preceding result holds just as well
for $C^1$-Lie groups and their automorphisms,
and has been formulated accordingly.
Recall that, while $C^k$-Lie groups and
analytic Lie groups coincide in the $p$-adic case~\cite{ANA},
for each local field of positive characteristic
there exist non-analytic smooth Lie groups
and $C^k$-Lie groups
which are not $C^{k+1}$,
for each $k\in \N$
(see \cite{NOA}).\\[2.5mm]
Our second main result
says that~$G$ is not only solvable,
but nilpotent,
at least under a slightly stronger
differentiability hypothesis ($k\geq 2$).
The conclusion even remains
valid for Lie groups over
complete ultrametric fields
which are not necessarily locally compact.\\[3.5mm]
{\bf Theorem~B.}\,
\emph{Given $k\in \N\cup\{\infty,\omega\}$
with $k\geq 2$,
let $G$ be a $C^k$-Lie group
over a complete ultrametric field $(\K,|.|)$
and $\alpha\colon G\to G$
be a contractive $C^k$-automorphism.
Then~$G$ is nilpotent.
Furthermore, there exists a central series}
%
\begin{equation}\label{censer}
\one \, =\, G_0 \, \triangleleft \, G_1\, \triangleleft\,
\cdots\, \triangleleft\,  G_m\, =\, G
\end{equation}
\emph{such that each $G_j$ is an $\alpha$-stable
$C^k$-Lie subgroup of~$G$.}\\[3.5mm]
While the proof of Theorem~A
(given in Section~\ref{sec1})
is based on the structure theory of totally
disconnected, locally compact contraction groups
from~\cite{SIM},
Theorem~B (proved in Section~\ref{sec3})
relies on entirely
different methods:
it uses the ultrametric stable manifolds
constructed in~\cite{STA}.\\[2.5mm]
To enable successful
application of the methods from~\cite{STA},
we first take a closer look
at the linearization
$L(\alpha)=T_1(\alpha)$
of~$\alpha$ around its fixed point~$1$
(see Section~\ref{secgr}).
For example, it is essential
for us that $L(\alpha)$
is a contractive automorphism
of $L(G)=T_1(G)$,
and that each eigenvalue
of~$L(\alpha)$ (in an algebraic closure) has absolute
value~$<1$
(as shown in~\cite{STA}).
Further results concerning
contractive Lie algebra automorphisms take
Siebert's treatment of the real case as a model.
He showed that each contractive automorphism
of a real Lie algebra~$\cg$
gives rise to a \emph{positive gradation}
on~$\cg$,
i.e., $\cg=\bigoplus_{r>0}\cg_r$
for vector subspaces
$\cg_r\sub \cg$ indexed by positive reals
such that $\cg_r=\{0\}$
for all but finitely many~$r$
and $[\cg_r,\cg_s]\sub \cg_{r+s}$
for all $r,s>0$.
Conversely, each positive gradation
yields contractive Lie algebra
automorphisms of~$\cg$~(see~\cite{Sie}).\\[2.5mm]
In the case of Lie algebras
over local fields,
the right class
of positive gradations
to look at are
\emph{$\N$-gradations},
i.e.\ positive gradations
$\cg=\bigoplus_{r>0}\cg_r$
such that $\cg_r\not=\{0\}$
implies $r\in \N$,
and thus $\cg=\bigoplus_{r\in \N}\cg_r$.
We show that
a Lie algebra~$\cg$ over a local field
admits an $\N$-gradation
if and only if it admits a contractive
Lie algebra automorphism (Proposition~\ref{charviagrad}).\\[2.5mm]
In Section~\ref{seclocglob},
we discuss the interplay between
contractive automorphisms of Lie groups
and Lie algebras.
In the real case, it is known that
each Lie group~$G$
admitting a contractive automorphism~$\alpha$
is simply connected,
and that $L(\alpha)$ is a contractive
Lie algebra automorphism.
Conversely, each contractive Lie algebra
automorphism of a real Lie algebra
integrates to a contractive
automorphism of the corresponding
simply connected real Lie group.
It is quite interesting
that, likewise,
we can always pass from
the Lie algebra level to the group
level in the case of complete ultrametric
fields of characteristic~$0$:\\[3.5mm]
{\bf Theorem~C.}\,
\emph{Let $(\K,|.|)$ be a complete
ultrametric field of characteristic~$0$,
$\cg$ be a Lie algebra over~$\K$
and $\beta\colon \cg\to\cg$
be a contractive Lie algebra automorphism.
Then there exists a $\K$-analytic
Lie group~$G$, unique up to isomorphism,
and a uniquely determined
$\K$-analytic contractive automorphism
$\alpha$ of~$G$ such that $L(\alpha)=\beta$.}\\[3.5mm]
In this case,
the appropriate substitute for
a simply connected group
is constructed with the help
of an HNN-extension.
Related
results are also obtained if $\car(\K)>0$,
but these are by necessity weaker.
For instance,
it may happen in positive
characteristic that two non-isomorphic
contraction groups give rise
to the same Lie algebra and the same
contractive Lie algebra automorphism.
An example for this phenomenon
(Example~\ref{ex221})
and examples illustrating various other
aspects of the theory are
compiled in Section~\ref{secexx}.\\[2.5mm]
Let us mention in closing that
results concerning contraction groups
also extend our knowledge of more general
automorphisms of Lie groups over local fields.
In fact, let~$G$ be a $C^k$-Lie group
over a local field, where $k\in \N\cup\{\infty,\omega\}$,
and $\alpha\colon G\to G$ be an automorphism
of~$C^k$-Lie groups.
Let $U_\alpha$ be the group of all
$x\in G$ such that $\alpha^n(x)\to 1$ as $n\to\infty$,
and $M_\alpha$ be the group of all $x\in G$
the two-sided orbit $\alpha^\Z(x)$ of which is
relatively compact in~$G$.
Then $(U_\alpha,\alpha|_{U_\alpha})$
and $(U_{\alpha^{-1}}, \alpha^{-1}|_{U_{\alpha^{-1}}})$
are contraction groups in the induced topology,
but they are also contraction groups
(with contractive $C^k$-automorphisms)
when equipped with suitable
immersed $C^k$-Lie subgroup structures
(see~\cite{STA}),\footnote{Making them the stable
manifold and unstable manifold of $\alpha$
around~$1$, respectively.}
which may correspond to properly
finer topologies.
Strongest results are available
if~$U_\alpha$ is closed.\footnote{This condition is
automatically
satisfied if $\car(\K)=0$ (see~\cite{Wan})
or if there exists an injective,
continuous homomorphism
from~$G$ to a general linear group~\cite{SPO}.
Various
characterizations of closedness of~$U_\alpha$
were given in~\cite{BaW}.}
Then all of $U_\alpha$, $U_{\alpha^{-1}}$
and $M_\alpha$ are $C^k$-Lie subgroups
of~$G$,
their complex product
$U_\alpha M_\alpha U_{\alpha^{-1}}$
is an open $\alpha$-stable
identity neighbourhood in~$G$,
and the product map
\[
U_\alpha \times M_\alpha\times
 U_{\alpha^{-1}}\to U_\alpha M_\alpha U_{\alpha^{-1}}
\]
is a $C^k$-diffeomorphism (see~\cite{Wan}
for the $p$-adic case, \cite{SPO}
for the general result).
The theorems of the current article
then apply to
$(U_\alpha,\alpha|_{U_\alpha})$
and $(U_{\alpha^{-1}}, \alpha^{-1}|_{U_{\alpha^{-1}}})$.
Some basic information on $M_\alpha$
can be drawn from~\cite{GW2} (cf.\ \cite{GW1}
and~\cite{Ra1} for the $p$-adic case):
$M_\alpha$ has small $\alpha$-stable
compact open subgroups. In contrast
to the case of contraction groups,
$M_\alpha$ need not have special
group-theoretic properties:
Choosing $\alpha=\id$, we get $G=M_\alpha$
and conclude that~$M_\alpha$
can be an arbitrary $C^k$-Lie group.\\[2.5mm]
Contraction groups of the form~$U_\alpha$
arise in many contexts:
In representation theory
in connection with the Mautner phenomenon
(see \cite[Chapter~II, Lemma~3.2]{Mar}
and (for the $p$-adic case) \cite{Wan});
in probability theory
on groups (see \cite{HaS}, \cite{Sie}
and (for the $p$-adic case)~\cite{DaS});
and in the structure theory
of totally disconnected, locally compact groups
developed in~\cite{Wil} (see \cite{BaW}).\\[3mm]
\emph{Acknowledgement.}
The author thanks George A. Willis
for useful discussions,
notably concerning the
examples in Section~\ref{secexx}.
\section{General conventions and facts}\label{sec0}
Complementing the definitions
already given in the Introduction,
we now fix additional notation and terminology.
%
%
\begin{numba}\label{convfields}
{\bf Conventions concerning valued fields.}
By a \emph{local field}, we mean
a totally disconnected,
non-discrete locally compact
topological field.
We fix an ultrametric absolute
value $|.|\colon \K\to [0,\infty[$ on~$\K$
defining its topology~\cite{Wei}.
A field~$\K$, equipped with
an ultrametric absolute value~$|.|$
which defines a non-discrete
topology on~$\K$ is called an
\emph{ultrametric field}; it is called \emph{complete}
if $\K$ is a complete metric space
with respect to the metric $(x,y)\mto |y-x|$.
Given a complete ultrametric field
$(\K,|.|)$ (e.g., a local field),
we fix an algebraic closure~$\wb{\K}$
of~$\K$ and use the same symbol, $|.|$,
for the unique extension of the given absolute value~$|.|$
to an absolute value on~$\wb{\K}$
(see \cite[Theorem~15.1]{Sch}).
If $(E,\|.\|)$ is a normed
space over a valued field~$(\K,|.|)$,
given $x\in E$ and $r>0$ we write
$B_r^E(x):=\{y\in E\colon \|y-x\|<r\}$
and $B_r:=B^E_r(0)$.
Given a continuous linear
map~$\alpha$ between normed spaces $(E,\|.\|_E)$
and $(F,\|.\|_F)$, its operator norm
is defined as
\[
\|\alpha\|_{\text{op}}\; :=\;\min\{r\in [0,\infty[\colon
(\forall x\in E)\; \|\alpha(x)\|_F\leq r\|x\|_E\}\,.
\]
\end{numba}
%
%
\begin{numba}\label{convcalc}
{\bf Differential calculus, manifolds and Lie groups.}
All manifolds, Lie groups and Lie algebras
considered in this article
are finite-dimensional.
Basic references for analytic
manifolds and analytic Lie groups over complete
ultrametric fields are \cite{Ser},
also \cite{Bo1} and~\cite{Bo2}.
We use the symbol ``$C^\omega$''
as a shorthand for ``analytic''
and agree that $n<\infty<\omega$
for all $n\in \N_0$,
where $\N=\{1,2,\ldots\}$ and $\N_0=\N\cup\{0\}$.
Let $E$ and~$F$ be (Hausdorff) topological
vector spaces over a non-discrete
topological field~$\K$
and $U\sub E$ be open.
Then $U^{[1]}:=\{(x,y,t)\in U\times E\times\K\colon
x+ty\in U\}$ is an open subset
of $E\times E\times \K$.
Following~\cite{Ber},
we say that $f$ is~$C^1$ if it is continuous and there
exists a (necessarily
unique) continuous map $f^{[1]}\colon
U^{[1]}\to F$ which extends the directional
difference quotient map, i.e.,
\[
f^{[1]}(x,y,t)\; =\;
\frac{f(x+ty)-f(x)}{t}
\]
for all $(x,y,t)\in U^{[1]}$ such that $t\not=0$.
Then $f'(x):=f^{[1]}(x,\sbull,0)\colon E\to F$
is a continuous linear map.
Inductively, $f$ is called $C^{k+1}$
for $k\in \N$ if~$f$ is $C^1$ and~$f^{[1]}$ is~$C^k$.
As usual, $f$ is called $C^\infty$
or smooth if $f$ is $C^k$ for all $k\in \N$.
If we want to stress~$\K$, we
shall also write~$C^k_\K$ in place of~$C^k$.\\[2.5mm]
In this article, we are only interested
in the case where $(\K,|.|)$
is a complete valued field and both~$E$
and~$F$ are finite-dimensional.
In the usual way, the above concept of $C^k$-map then
gives rise to a notion of (finite-dimensional) $C^k$-manifold
and a notion of (finite-dimensional) $C^k$-Lie group:
this is a group, equipped with
a $C^k$-manifold
structure which turns group
multiplication and inversion
into $C^k$-maps.
We let $L(G):=T_1(G)$
denote the tangent space at the identity element
$1\in G$ and set $L(f):=T_1(f)$
for a $C^k$-homomorphism $f\colon
G\to H$ between $C^k$-Lie groups.
If $k\geq 3$, then the Lie bracket
of left invariant vector fields
can be used in the usual
way to turn~$L(G)$ into
a Lie algebra, and $L(f)$ then
is a Lie algebra homomorphism.\\[2.5mm]
We mention that the $C^k$-maps used in this article
generalize the $C^k$-functions of a single variable
common in non-archimedian analysis
(as in~\cite{Sch}).
If $(\K,|.|)$ is a complete valued field,
then a map between open subsets
of finite-dimensional
vector spaces is~$C^1$
if and only if it is
strictly differentiable at each point
of its domain, in the sense of \cite[1.2.2]{Bo1}
(see \cite[Appendix C]{IM2};
for locally compact
fields, cf.\ also \cite[\S4]{IMP}).\footnote{This
fact enables us to use (and cite) various
results concerning $C^1$-maps
and $C^1$-Lie groups
from \cite{IMP}, \cite{IM2}
and \cite{ANA} also in the case
of non-locally compact,
complete
ultrametric fields,
which (strictly speaking) are formulated there
only in the locally compact case.
The proofs only use strict differentiability and
therefore carry over without
changes.}
In particular, such a map is totally differentiable
at each point.
For a survey of differential calculus
over topological fields
covering various aspects
of relevance for the current article,
the reader is referred to~\cite{ASP}.\\[2.5mm]
Because inverse- and implicit function theorems
are available for $C^k$-maps between
finite-dimensional vector spaces
over complete valued fields
(see \cite[notably Appendix C]{IM2}),\footnote{See
also \cite{IMP} for the cases where
$k\geq 2$
or the ground field is locally compact.}
we can define immersions
as in the analytic case~\cite{Ser},
with analogous properties.
If $k\in \N\cup\{\infty,\omega\}$,
$M$ is a $C^k$-manifold
and $N\sub M$ a $C^k$-manifold
such that the inclusion map $\iota\colon N\to M$ is an
immersion, we call~$N$ an \emph{immersed
$C^k$-submanifold} of~$M$;
if~$\iota$ is furthermore
a homeomorphism onto its image,
we call~$N$ a \emph{$C^k$-submanifold}.
Locally around each of its points,
$N$ then looks like a vector subspace
inside the modelling space of~$M$.
Given a $C^k$-Lie group~$G$,
a subgroup~$H$ equipped with a $C^k$-Lie
group structure making it a
$C^k$-submanifold of~$G$ (resp.,
an immersed $C^k$-submanifold)
is called a \emph{$C^k$-Lie subgroup}
(resp., an \emph{immersed $C^k$-Lie subgroup}).
In particular, every $C^k$-Lie subgroup of~$G$
is closed in~$G$.
\end{numba}
%
%
\begin{numba}\label{convaut}
{\bf Automorphisms and contraction groups.}
Given an automorphism $\alpha$ of a topological
group~$G$ and a subset $X\sub G$,
we say that~$X$ is \emph{$\alpha$-stable}
(resp., \emph{$\alpha$-invariant})
if $\alpha(X)=X$ (resp., $\alpha(X)\sub X$).
If we speak of $C^k$-isomorphisms
between $C^k$-Lie groups
or $C^k$-automorphism,
we assume that the inverse map
is $C^k$ as well.
A topological group (resp., $C^k$-Lie group)
$G$ is called \emph{contractible}
if it admits a contractive automorphism
(resp., a contractive $C^k$-automorphism).
Given a contraction group
$(G,\alpha)$, a series
$\one=G_0\triangleleft G_1\triangleleft\cdots
\triangleleft G_n=G$
of $\alpha$-stable, \emph{closed}
subgroups of~$G$
is called an \emph{$\langle\alpha\rangle$-series};
it is called a \emph{composition series}
if it does not admit a proper refinement
(see~\cite{SIM}).
%
\begin{defn}\label{unifcontr}
Let $(G,\alpha)$ be a contraction group.
\begin{itemize}
\item[(a)]
$\alpha$
is \emph{uniformly contractive}
$($or a \emph{uniform contraction}$)$
if each identity neighbourhood
of $G$ contains an $\alpha$-invariant
identity neighbourhood.
\item[(b)]
$\alpha$ is
\emph{compactly contractive}
if, for each compact set $K\sub G$
and identity neighbourhood $U\sub G$,
there is $n_0\in \N$ with
$\alpha^n(K)\sub U$~for~all~$n\geq n_0$.
\end{itemize}
\end{defn}
A simple compactness argument shows
that each uniformly contractive auto\-morphism
is compactly contractive.\\[2.5mm]
Although our main concern are contractive automorphisms
of Lie groups over local fields,
some of our results will also apply
to Lie groups
over non-locally compact, complete ultrametric fields,
like $\C_p$ and $\Q(\!(X)\!)$.\\[2.5mm]
Consider
a (finite-dimensional)
$C^k$-Lie group~$G$ over a complete ultrametric
field~$\K$.
Then~$G$ is complete
(see \cite[Proposition~2.1\,(a)]{ANA})\footnote{Recalling
footnote~3 if $k=1$ and $\K$ fails to be locally compact.}
and metrizable.
This implies that, automatically,
each contractive (bicontinuous) automorphism~$\alpha$
of such a Lie group
is uniformly and compactly contractive (cf.\ \cite[Lemma~1]{Si2}).
Since $G$ has arbitrarily small
open subgroups (see \cite[Proposition~2.1]{ANA}),
Siebert's construction in~\cite{Si2}
even produces small $\alpha$-invariant
identity neighbourhoods.
\end{numba}
We recall another useful fact,
the proof of which exploits that
contractive automorphisms of Lie groups
are uniformly contractive.
%
\begin{numba}\label{henceappl}
Let $(\K,|.|)$ be a complete ultrametric
field,
$G$ be a $C^1$-Lie group
and $\alpha\colon G\to G$ be a contractive
$C^1$-automorphism.
Then $\beta:=L(\alpha)$
is a contractive automorphism of~$L(G)$
and all eigenvalues of~$\beta$
in $\wb{\K}$
have absolute value $<1$
(see \cite{STA}).
Furthermore, there exists an ultrametric
norm $\|.\|$ on~$\cg$
such that $\|\beta\|_{\text{op}}<1$
holds for the corresponding operator
norm (see \cite{STA};
cf.\ also \cite[Lemma~3.3 and its proof]{SCA}
and \cite[Chapter~II, \S1]{Mar}).
\end{numba}
\section{Proof of Theorem~A}\label{sec1}
%
Given a contractive automorphism
$\alpha$ of a totally disconnected, locally
compact group~$G$, there exists
an $\alpha$-invariant, compact
open subgroup~$U$
such that $\alpha(U)\, \triangleleft \, U$
(see \cite[3.1 and Lemma~3.2]{Sie}),
whence $(\alpha^n(U))_{n\in \Z}$
is a filtration for~$G$ adapted to~$\alpha$ in the
sense of \cite[3.3]{Sie}.
This filtration can be used to compare
structures on~$G$ (or its subgroups).\footnote{This idea is
also the basis for the results in~\cite{NOA}.}
We shall use it in the proof
of Theorem~A to see that
the $\K$-Lie group structure
on~$G$ and the $p$-adic
Lie group structure on
a certain hypothetical
subgroup are incompatible.
The next lemma will be used
to relate the groups $\alpha^n(U)$ 
to balls in a local chart.
%
%
\begin{la}\label{prelem}
Let $(\K,|.|)$ be a complete
ultrametric field,
$k\in \N\cup\{\infty,\omega\}$,
$G$ be a $C^k$-Lie group
over~$\K$,
$S\sub G$ be an open subgroup
and $\alpha\colon S\to G$ be a $C^1$-homomorphism.
We set $\cg:=L(G)$,
$\beta:=L(\alpha)$
and assume that
$\beta$ is a linear isomorphism
and $\Theta:=\|\beta \|_{\op}<1$
for some ultrametric norm
$\|.\|$ on~$\cg$.
Then there exists
an $\alpha$-invariant
open identity neighbourhood~$U\sub S$
and a $C^k$-diffeomorphism
$\phi\colon U\to B_r\sub \cg$ for some $r>0$
with $\phi(1)=0$ and $T_1(\phi)=\id_\cg$,
such that the sets
$U_s:=\phi^{-1}(B_s)$
have the following properties:
\begin{itemize}
\item[\rm(a)]
$U_{\theta s}\sub \alpha(U_s)
\sub U_{\Theta s}$
for each $s\in \;]0,r]$,
where $\theta:=1/\|\beta^{-1}\|_{\op}$.
\item[\rm(b)]
$U_s$ is a subgroup of~$G$
for each $s\in \;]0,r]$,
and a normal subgroup of~$U_r$.
Also, $U_s/U_{\theta s}$ is abelian
for each $s\in \;]0,r]$,
and thus $\alpha(U_s) \triangleleft \, U_s$.
\end{itemize}
If $\car(\K)=0$ and $|p|<1$ for a prime~$p$,
one can also achieve:
\begin{itemize}
\item[\rm(c)]
$(U_s)^p=U_{|p|s}$ holds
for the set of $p$-th powers,
for each $s\in \;]0,r]$.
\end{itemize}
If $\K$ has characteristic $p>0$,
one can also achieve:
\begin{itemize}
\item[\rm(d)]
For each $\ve\in \;]0,1[$,
there exists $r_0\in \;]0,r]$
such that
$(U_s)^p\sub U_{\ve s}$ for each
$s\in \;]0,r_0]$.
\end{itemize}
\end{la}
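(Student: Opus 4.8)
The plan is to transport everything into a single chart around~$1$ in which $\alpha$, the group operations and the power map become linear maps plus strictly differentiable perturbations, and then to read off (a)--(d) from ultrametric estimates on balls. First I would fix a chart $\phi_0$ of~$G$ at~$1$ with $\phi_0(1)=0$ and $T_1(\phi_0)=\id_\cg$ (replacing an arbitrary chart $\psi$ by $T_1(\psi)^{-1}\circ\psi$), with domain contained in~$S$, and use the given norm $\|.\|$ on~$\cg$. Writing $m,\iota$ for multiplication and inversion and $A:=\phi_0\circ\alpha\circ\phi_0^{-1}$ in these coordinates, the normalization $T_1(\phi_0)=\id$ gives $m(x,y)=x+y+\mu(x,y)$, $\iota(x)=-x+\rho(x)$ and $A(x)=\beta(x)+\nu(x)$, where the perturbations $\mu,\rho,\nu$ are $C^1$, vanish at~$0$, and have vanishing differential there (and $\mu$ vanishes on both coordinate axes). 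The one tool I would use throughout is that a $C^1$-map is strictly differentiable, so that on a ball on which $\|f'(z)\|_{\op}\le L$ it is $L$-Lipschitz; since all the above perturbations have vanishing differential at~$0$, I can choose $r>0$ so small that each is Lipschitz on $B_r$ (resp.\ $B_r\times B_r$) with a constant as small as I please, and so small that all coordinate maps are defined there. I then set $\phi:=\phi_0|_U$ with $U:=\phi_0^{-1}(B_r)$ and $U_s:=\phi^{-1}(B_s)$.

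For (a), choosing $r$ so that $\nu$ is $\lambda$-Lipschitz with $\lambda<\theta$ and using $\|\nu(x)\|\le\lambda\|x\|<\theta\|x\|\le\|\beta(x)\|$, the ultrametric inequality yields $\|A(x)\|=\|\beta(x)\|$, whence $\|A(x)\|\le\Theta\|x\|$ gives $\alpha(U_s)\sub U_{\Theta s}$. For the reverse inclusion I would solve $A(x)=y$ for given $y\in B_{\theta s}$ by the Banach fixed point theorem applied to the contraction $x\mapsto\beta^{-1}(y-\nu(x))$ of the (clopen, hence complete) ball $B_s$; this produces $x\in B_s$ with $A(x)=y$ and gives $U_{\theta s}\sub\alpha(U_s)$. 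Taking $s=r$ shows $\alpha(U)\sub U_{\Theta r}\sub U$, so $U$ is $\alpha$-invariant.

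For (b), shrinking $r$ so that $\|\mu(x,y)\|\le\tfrac12\max(\|x\|,\|y\|)$ and $\|\rho(x)\|\le\tfrac12\|x\|$, the ultrametric inequality gives $\|m(x,y)\|<s$ and $\|\iota(x)\|<s$ whenever $\|x\|,\|y\|<s$, so each $U_s$ is a subgroup. The decisive estimate is for the commutator map~$\kappa$ (the coordinate form of $(g,h)\mapsto ghg^{-1}h^{-1}$), which vanishes whenever $x=0$; hence $D_2\kappa(0,0)=0$, and by continuity I can choose $r$ with $\|D_2\kappa(x,y)\|_{\op}\le\theta$ on $B_r\times B_r$. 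The Lipschitz estimate in the second slot then gives the uniform bound $\|\kappa(x,y)\|=\|\kappa(x,y)-\kappa(x,0)\|\le\theta\|y\|$ for all $x\in B_r$. This single bound does both jobs: for $x\in B_r$, $y\in B_s$ it puts $[\phi^{-1}(x),\phi^{-1}(y)]$ in $U_s$, so $ghg^{-1}=[g,h]h\in U_s$ and $U_s\triangleleft U_r$; and for $x,y\in B_s$ it gives $[U_s,U_s]\sub U_{\theta s}$, i.e.\ $U_s/U_{\theta s}$ abelian. Finally $\alpha(U_s)$ is a subgroup with $U_{\theta s}\sub\alpha(U_s)\sub U_s$ by (a), and for $g\in U_s$, $h\in\alpha(U_s)$ one has $ghg^{-1}=[g,h]h$ with $[g,h]\in[U_s,U_s]\sub U_{\theta s}\sub\alpha(U_s)$, so $\alpha(U_s)\triangleleft U_s$.

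For (c) and (d) I would linearize the $p$-th power map $P\colon g\mapsto g^p$, whose differential at~$1$ equals $p\cdot\id_\cg$ by induction over the multiplication. If $\car(\K)=0$ and $|p|<1$, then $\|p\cdot\id\|_{\op}=|p|$ and $p\cdot\id$ is invertible, so the argument of (a) applied to~$P$ (with $\theta=\Theta=|p|$) shows that $P$ maps $B_s$ bijectively onto $B_{|p|s}$, giving $(U_s)^p=U_{|p|s}$. If $\car(\K)=p>0$, then $p\cdot\id=0$, so $P'(1)=0$; given $\ve\in\,]0,1[$ I would choose $r_0\le r$ with $\|P'(z)\|_{\op}\le\ve$ on $B_{r_0}$, whence $\|P(x)\|=\|P(x)-P(0)\|\le\ve\|x\|<\ve s$ for $\|x\|<s\le r_0$ and thus $(U_s)^p\sub U_{\ve s}$. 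The main obstacle is the commutator estimate in (b): the naive bound is in terms of $\max(\|x\|,\|y\|)$ and is useless for normality when $\|x\|\gg\|y\|$, so the crux is to obtain the uniform-in-$x$ bound $\|\kappa(x,y)\|\le\theta\|y\|$ controlled by the smaller argument alone, which is exactly what the vanishing of $D_2\kappa(0,0)$ together with the Lipschitz estimate in the second variable provides; a secondary point requiring care is the verification that the perturbed maps in (a) and (c) are genuinely bijective onto the asserted balls, for which completeness of the clopen balls and the Banach fixed point theorem are used.
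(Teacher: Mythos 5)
Your overall strategy is viable and is in fact more self-contained than the paper's: the paper obtains (b), (c) and (d) by citing prior results on small subgroups of $C^k$-Lie groups (Proposition~2.1 of the reference \cite{ANA}) and derives (a) from the Ultrametric Inverse Function Theorem of \cite{IMP}, whereas you reprove these facts from scratch in a single normalized chart; your Banach fixed point argument for $U_{\theta s}\subseteq \alpha(U_s)$ is essentially the proof of that inverse function theorem, unpacked. All of the stated conclusions, including the reduction of normality of $\alpha(U_s)$ in $U_s$ to the abelianness of $U_s/U_{\theta s}$, are correct.

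There is, however, one genuine flaw in your declared ``one tool'': over an ultrametric field it is \emph{not} true that a $C^1$-map with $\|f'(z)\|_{\op}\leq L$ throughout a ball is $L$-Lipschitz on that ball. A locally constant function on $\Z_p$ has identically vanishing derivative yet need not be $0$-Lipschitz (take $f=N$ on the unit sphere and $f=0$ on $p\Z_p$); there is no mean value inequality because balls are totally disconnected. You lean on this false principle in two essential places: the bound $\|\kappa(x,y)-\kappa(x,0)\|\leq\theta\|y\|$ deduced from $\|D_2\kappa\|_{\op}\leq\theta$ on $B_r\times B_r$, and the bound $\|P(x)-P(0)\|\leq\ve\|x\|$ deduced from $\|P'\|_{\op}\leq\ve$ on $B_{r_0}$ in (d). Both estimates are nevertheless true, and both are rescued by the correct mechanism you already use elsewhere, namely strict differentiability at a \emph{single} base point: since $\kappa$ is $C^1$ with $\kappa'(0,0)=0$ and $\kappa(x,0)=0$, strict differentiability of $\kappa$ at $(0,0)$ applied to the pair of points $(x,y)$ and $(x,0)$ yields $\|\kappa(x,y)\|\leq\ve\max\{\|x-x\|,\|y-0\|\}=\ve\|y\|$ once both points lie in a sufficiently small bidisk; likewise $P'(0)=p\cdot\id=0$ in characteristic $p$, and strict differentiability of $P$ at $0$ applied to the points $x$ and $0$ yields $\|P(x)\|\leq\ve\|x\|$ on a small ball. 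Your treatment of $\mu$, $\rho$ and $\nu$ is already of this single-base-point form and is fine. With these two justifications rerouted, the proof closes.
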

\begin{proof}
Set $\theta:=\|\beta^{-1}\|^{-1}_{\op}$.
Then $0<\theta<\Theta<1$.
There exists a chart $\phi\colon U\to B_r$
for some open identity neighbourhood~$U\sub S$
and some $r>0$, such that
$\phi(1)=0$ and $T_1(\phi)=\id_\cg$.
After shrinking~$r$, we may assume
that $U_s:=\phi^{-1}(B_s)$ is
a subgroup of~$G$ for each
$s\in \;]0,r]$,
and that the remainder of~(b) as well
as (c)
resp.\ (d) hold
(see \cite[Proposition~2.1 (b), (f) and (i)]{ANA}).\footnote{The
formula $(U_s)^p=U_{|p|s}$ is shown
in~\cite{ANA} only if $|p|=p^{-1}$,
but the proof works as well
for arbitrary $|p|>0$.}
There exists $t\in \; ]0,r]$
such that $\alpha(U_t)\sub U_r$.
We can therefore define
a $C^k$-map
$\gamma\colon B_t\to B_r$
via $\gamma(x):=\phi(\alpha(\phi^{-1}(x)))$.
Our hypotheses ensure that~$\gamma$
is strictly differentiable at~$0$.
Now $\gamma'(0)$ being invertible,
the Ultrametric Inverse Function Theorem
\cite[Proposition~7.1]{IMP}
shows that, after shrinking~$t$ if necessary,
we have $\gamma(B_s)=\gamma'(0).B_s$
for each $s\in \;]0,t]$.
Since $B_{\theta s}\sub \gamma'(0).B_s\sub B_{\Theta s}$,
we deduce that~(a) holds after
replacing~$r$ with~$t$.
\end{proof}
{\bf Proof of Theorem~A.}
We recall from \cite[Theorem~B]{SIM}
that $G=D\times T$ internally
as a topological group,
where~$T$ is the subgroup of all torsion elements
and~$D$ the subgroup of all divisible elements.
Also by \cite[Theorem~B]{SIM},
$D$ is a direct product
$D_{p_1}\times\cdots \times D_{p_m}$,
where $p_k$ is a prime and $D_{p_k}$ a non-discrete
$p_k$-adic Lie group
for $k\in \{1, \ldots, m\}$.
Let $p$ be the characteristic of~$\K$.
Then~$G$ is locally pro-$p$,
i.e., it has a compact open subgroup
which is a pro-$p$-group
(see \cite[Proposition~2.1\,(h)]{ANA}).
Hence also each $D_{p_k}$
is locally pro-$p$ and
hence $p_k=p$ (cf.\ \cite[\S1.2]{Dix}),
whence~$D$ actually
is a $p$-adic Lie group.
To see that $D=\{1\}$,
let us assume that
$D\not=\{1\}$ and derive
a contradiction.
Being a non-trivial contraction group,
$D$ is then non-discrete
(see \cite[1.8\,(c)]{Sie}).\\[2.5mm]
Throughout the remainder
of the proof, the letters
(a)--(d) refer to the conditions
formulated in Lemma~\ref{prelem}.
Applying Lemma~\ref{prelem}
to~$G$ and~$\alpha$
(which is possible by {\bf\ref{henceappl}}),
we obtain $r>0$, $\theta:=1/\|L(\alpha)^{-1}\|_{\text{op}}\in \;]0,1[$
and an $\alpha$-invariant
compact open subgroup $U=U_r\sub G$
satisfying conditions
(a) and~(d). Then
%
\begin{equation}\label{stasta}
U_{\theta^ks}\; \sub \; \alpha^k(U_s)
\quad \mbox{for all $s\in \;]0,r]$ and $k\in \N$,}
\end{equation}
by a simple induction based on~(a).
By~(d), after shrinking~$r$,
we have
%
\begin{equation}\label{newe}
U^{p^k}\; \sub \; U_{\theta^k r}\quad
\mbox{for each $\,k\in \N$.}
\end{equation}
Since $\alpha|_D$
is a continuous
(and hence analytic)
contractive automorphism
of the $p$-adic Lie group~$D$,
applying
Lemma~\ref{prelem}
to $(\Q_p, |.|_p)$, $D$ and $\alpha|_D$
we get some $R>0$,
$\Theta:=\|L(\alpha|_D)\|_{\text{op}}\in \;]0,1[$,
a compact
open subgroup $V=V_R\sub D$
and subgroups $V_s\sub V_R$
satisfying analogues
of~(a) and~(c).
After shrinking~$R$,
we may assume that $V\sub U$.
Since $\alpha|_D$ is compactly
contractive,
there exists $N\in \N$ such that
$\alpha^N(U\cap D)\sub V$.
Choose $\ell\in \N$ so large that
$\ell\log_p(\Theta)<-1$
and set $\ve:=\theta^\ell$.
Since~$U$ satisfies~(d),
there exists $r_0\in \;]0,r]$
such that
\[
(U_s)^p\; \sub \; U_{\ve s}\;=\; U_{\theta^\ell s}
\quad
\mbox{for each
$s\in \;]0,r_0]$.}
\]
There is $M\in \N$ such that
$\theta^Mr<r_0$ and hence
$U^{p^M}\sub U_{\theta^Mr}\sub U_{r_0}$,
using~(\ref{newe}). 
Then
$U^{p^{k+M}}\sub U_{\theta^{k \ell} r_0}$
for each $k\in \N$,
by a trivial induction.
Here $U_{\theta^{k \ell} r_0}\sub \alpha^{k \ell}(U_{r_0})$,
by~(\ref{stasta}).
Thus
\begin{eqnarray*}
V_{p^{-k-M}R} & = & V^{p^{k+M}}
\; \sub \; U^{p^{k+M}}\cap D
\; \sub\; 
\alpha^{k \ell}(U_{r_0})\cap D\\
& \sub &  \alpha^{k\ell}(U)\cap D
\; =\;
\alpha^{k \ell}(U  \cap D)
\; \sub \;  \alpha^{k\ell-N}(V)
\; \sub \; V_{\Theta^{k\ell-N}R}
\end{eqnarray*}
for $k\in \N$ such that $k\ell \geq N$, using~(c) to obtain the first
equality.
As a consequence,
$p^{-k-M}R \leq p \Theta^{k\ell-N}R$
and hence
%
\begin{equation}\label{sta}
-k-M\; \leq \; 1+(k\ell-N) \log_p(\Theta)\,.
\end{equation}
Dividing both sides of~(\ref{sta})
by $k$ and letting $k\to\infty$,
we obtain the contradiction
$-1\leq \ell \log_p(\Theta)$.
Hence $D=\{1\}$ and thus $G=T$ is a torsion group.\\[2.5mm]
We now pick
a composition series~(\ref{seon})
of $\alpha$-stable closed subgroups
of~$G$ (as provided by \cite[Theorem~3.3]{SIM}).
Since $G_j/G_{j-1}$
is a torsion
group for each $j\in \{1,\ldots, n\}$,
the classification of the simple
contraction groups \cite[Theorem~A]{SIM}
shows that
$G_j/G_{j-1}\isom F_j^{(-\N)}\times F_j^{\N_0}$
with the right shift,
for some finite simple group~$F_j$.
Since~$G$ is locally pro-$p$,
so is $G_j/G_{j-1}$.
Hence~$F_j$ has to be a $p$-group,
entailing that $F_j\isom C_p$.
As a consequence, $x^{p^n}=1$
for each $x\in G$.
Each factor $G_j/G_{j-1}$ being abelian,
$G$ is solvable.\vspace{2mm}\Punkt
\section{Contractible Lie algebras and {\boldmath$\N$}-gradations}\label{secgr}
%
%
Let us call a Lie algebra~$\cg$ over a local
field~$\K$ \emph{contractible}
if there exists a contractive
Lie algebra automorphism $\alpha\colon \cg\to\cg$.
In this section, we prove
the following result:
%
\begin{prop}\label{charviagrad}
A Lie algebra $\cg$ over a local
field~$\K$ is contractible
if and only if it admits an
$\N$-gradation.
\end{prop}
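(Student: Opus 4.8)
I want to prove Proposition~\ref{charviagrad}: a Lie algebra $\cg$ over a local field $\K$ is contractible if and only if it admits an $\N$-gradation. The "if" direction should be the easy one, so let me start there, and I'll try to handle both directions by passing to eigenspace/generalized-eigenspace decompositions of a contractive automorphism, noting that everything is governed by the absolute values of eigenvalues.

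Let me think carefully about each direction.

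**"If" direction (gradation ⟹ contractive automorphism).** Suppose $\cg=\bigoplus_{r\in\N}\cg_r$ with $[\cg_r,\cg_s]\sub\cg_{r+s}$. I want to build a contractive automorphism. The natural thing: pick a scalar $\lambda\in\K$ with $|\lambda|<1$, and define $\beta$ to act on $\cg_r$ as multiplication by $\lambda^r$. Then for $X\in\cg_r$, $Y\in\cg_s$, I have $\beta[X,Y]=\lambda^{r+s}[X,Y]=[\lambda^r X,\lambda^s Y]=[\beta X,\beta Y]$, since $[X,Y]\in\cg_{r+s}$. So $\beta$ is a Lie algebra homomorphism, it's invertible (each $\lambda^r\neq 0$), and all its eigenvalues have absolute value $|\lambda|^r\leq|\lambda|<1$. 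On a finite-dimensional space over a local field, an endomorphism all of whose eigenvalues have absolute value $<1$ is contractive — $\beta^n\to 0$ in operator norm. So this direction is essentially immediate once I observe that such a $\lambda$ exists (it does: $\K$ is non-discrete, so it has elements of absolute value in $(0,1)$).

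**"Only if" direction (contractive automorphism ⟹ gradation).** This is the substantive part. Suppose $\beta:\cg\to\cg$ is a contractive Lie algebra automorphism. By {\bf\ref{henceappl}} (or rather the general fact it cites), all eigenvalues of $\beta$ in $\wb\K$ have absolute value $<1$. I want to produce an $\N$-gradation.

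The idea: decompose $\cg\otimes_\K\wb\K$ (or work with $\K$-subspaces indexed by absolute values of eigenvalues) according to $|{\lambda}|$. For each real $t>0$, let $\cg_t$ be the sum of generalized eigenspaces of $\beta$ for eigenvalues $\lambda$ with $|\lambda|=t$. The key bracket estimate: if $\lambda,\mu$ are eigenvalues, then $[\text{eigvec}_\lambda,\text{eigvec}_\mu]$ lies in the generalized eigenspace for eigenvalue $\lambda\mu$ (this follows from $\beta$ being an automorphism: $\beta[X,Y]=[\beta X,\beta Y]$, so generalized eigenspaces multiply). Hence $[\cg_t,\cg_{t'}]\sub\cg_{tt'}$. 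This already gives a positive gradation indexed multiplicatively by absolute values — I'd want to reindex via a logarithm, $r:=-\log t$ with an appropriate base, to turn multiplication into addition and get $[\cg_r,\cg_s]\sub\cg_{r+s}$. These $\cg_t$ are $\beta$-invariant $\K$-subspaces (not just $\wb\K$-subspaces), since $\beta$ is defined over $\K$ and the decomposition by $|\lambda|$ groups together Galois-conjugate eigenvalues.

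**The main obstacle.** The hard part is getting the indices into $\N$ rather than arbitrary positive reals, i.e.\ upgrading a "positive gradation" to an "$\N$-gradation." The absolute values $t=|\lambda|$ take values in the (discrete, cyclic) value group of $\K$, so $-\log t$ lands in a lattice $\tfrac{1}{d}\Z\log q$ for the residue characteristic data — a discrete set, but not obviously $\N$ after normalization, and crucially I need the $\cg_r$ to vanish unless $r\in\N$, meaning I must rescale so all the relevant exponents become positive integers. The fix I expect to use: since the value group is discrete, the finitely many absolute values $t_1,\dots,t_k$ occurring are all powers of a common base; choosing the logarithm base to be that common base makes each exponent a nonnegative integer, and the $[\cg_r,\cg_s]\sub\cg_{r+s}$ relation forces, together with $[\cg_t,\cg_{t'}]\sub\cg_{tt'}$, that the grading is by $\N$ after I discard the (empty) degree-$0$ part — note no eigenvalue has $|\lambda|=1$, so $t<1$ always, so every exponent is $\geq 1$, which is exactly what I need. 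I would also need to verify these subspaces really are defined over $\K$ (descent from $\wb\K$), which I'd justify by Galois-stability of the $|\lambda|=t$ grouping; this is the technical point requiring care but it is standard.
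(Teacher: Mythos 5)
Your proposal is correct and follows essentially the same route as the paper: decompose $\cg$ into the $\K$-forms of the sums of generalized eigenspaces of $\beta$ grouped by the absolute value of the eigenvalue, use $[\cg_t,\cg_{t'}]\sub\cg_{tt'}$, and rescale logarithmically (exploiting that the finitely many characteristic values lie in $a^{\frac{1}{m}\Z}$ for a common denominator $m$, via the norm formula $|z|=\sqrt[d]{|N_{\bL/\K}(z)|}$) to land in $\N$; the converse via multiplication by $\theta^n$ on $\cg_n$ is identical. Your one imprecision --- the values $|\lambda|$ lie in the non-discrete group $|\wb{\K}^\times|$ rather than in the discrete value group of $\K$ --- is harmless, since you immediately supply the correct fix of taking a common base for the finitely many values that actually occur.
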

The proof
is based on some facts
concerning automorphisms
of vector spaces over ultrametric fields,
which we now recall
(and which will be re-used later).
%
\begin{numba}\label{contrdec}
Let $E$ be a finite-dimensional
vector space over a
complete ultrametric field $(\K,|.|)$
and~$\alpha$ be a linear automorphism
of~$E$.
For each $r>0$, we let
$F_r$ be the sum of all generalized
eigenspaces of $\alpha\tensor_\K\id_{\wb{\K}}$
in $E\tensor_\K\wb{\K}$
to eigenvalues $\lambda\in \wb{\K}$
of absolute value $|\lambda|=r$.
By \cite[Chapter~II, \S1]{Mar},
$F_r$ is defined over~$\K$,
whence $F_r=E_r\tensor_\K \wb{\K}$
with $E_r:=F_r\cap E$.
Then
%
\begin{equation}\label{pregrad}
E \; =\; \bigoplus_{r>0}E_r\, .
\end{equation}
We call $r\in \; ]0,\infty[$
a \emph{characteristic value}
of~$\alpha$ if $E_r\not=\{0\}$,
and let $R(\alpha)$ be the set
of characteristic values.
There exists an ultrametric norm
on~$E$ such~that
%
\begin{equation}\label{googoo}
\|\alpha(v)\|\;=\; r\|v\|\quad
\mbox{for each $r\in R(\alpha)$ and $v\in E_r$}
\end{equation}
(see~\cite{STA}; cf.\ \cite[Lemma~3.3 and its proof]{SCA}).
Hence $\alpha$ is contractive
if and only if
$R(\alpha)\sub \;]0,1[$.
\end{numba}
{\bf Proof of Proposition~\ref{charviagrad}.}
Given a contractive Lie algebra
automorphism $\alpha\colon \cg\to\cg$,
let $\wb{\K}$ and~$|.|$ be as in~{\bf\ref{convfields}}.
There is $a>1$ such that
$|\K^\times|=\langle a\rangle\leq \R^\times$
(cf.\ \cite{Wei} or \cite[Corollary~12.2]{Sch}).
If $z\in \wb{\K}^\times$,
$\bL:=\K(z)$ and $d:=[\bL:\K]$ is the degree
of the field extension,
then $|z|=\sqrt[d]{|N_{\bL/\K}(z)|\,}\in \langle \sqrt[d]{a}\, \rangle$
using the norm $N_{\bL/\K}(z)$ (see
\cite[Theorem~9.8]{Jac}).
Therefore
%
\begin{eqnarray}\label{usesoon}
\log_a |\wb{\K}^\times|\; \leq\; \Q\,.
\end{eqnarray}
Applying the considerations from
{\bf\ref{contrdec}} to $E:=\cg$
and~$\alpha$, we obtain $R(\alpha)$,
spaces $F_r$
and vector subspaces $E_r\sub \cg$
with $\cg=\bigoplus_{r>0}E_r$.
Since $R(\alpha)\sub \;]0,1[$,
using (\ref{usesoon}),
we find $m\in \N$ such that
that $-m\log_a(R(\alpha))\sub \N$.
Hence
%
\begin{equation}\label{ngrd}
\cg\;=\; \bigoplus_{n\in \N} \cg_n
\end{equation}
with $\cg_n:=E_{a^{-n/m}}$.
Since $[F_r,F_s] \sub F_{rs}$
and hence $[E_r,E_s] \sub E_{rs}$
for all $r,s>0$ as a consequence
of Proposition~12\,(i)
in \cite[Chapter~7, \S1, no.\,4]{BoX},
it follows that~(\ref{ngrd})
is an $\N$-gradation.\\[2.5mm]
Conversely, assume that
$\cg=\bigoplus_{n\in \N}\cg_n$
is an $\N$-gradation.
Pick $\theta\in \K^\times$
such that $|\theta|<1$.
Then the unique $\K$-linear map
$\alpha\colon \cg\to\cg$
taking $x\in \cg_n$ to $\theta^nx$
is a contractive Lie algebra
automorphism of~$\cg$.\,\Punkt
\section{Contractible Lie groups are nilpotent}\label{sec3}
%
%
In this section, we prove Theorem~B.
The proof uses the stable
manifolds for ultrametric
dynamical systems constructed
in~\cite{STA}
by an adaptation
of Irwin's method (as
in~\cite{Ir1} and \cite{Wel}).\footnote{See also~\cite{IMP}
and~\cite{ASP}
for outlines of the main steps
of this construction.}
%
%
\begin{numba}\label{setsta}
Let $(\K,|.|)$ be a complete
ultrametric field
and $k\in \N\cup\{\infty,\omega\}$.
Let $M$ be a finite-dimensional
$C^k$-manifold over~$\K$,
$\alpha \colon M\to M$ be a $C^k$-diffeomorphism
and $z \in M$ be a fixed point of~$\alpha$.
Write $r_1<\cdots< r_n$ for
the characteristic
values of~$T_z(\alpha)$.
Given $a\in \; ]0,1[\, \setminus \{r_1,\ldots, r_n\}$,
let $W^s_a(M,z)$ be the set
of all $x\in M$
with the following property:
For some (and hence each)
chart $\phi\colon U\to V\sub T_z(M)$ of~$M$
around~$z$ such that $\phi(z)=0$
and $T_z(\phi)=\id_{T_z(M)}$,
and some (and hence each)
norm~$\|.\|$ on~$T_z(M)$,
there exists $n_0\in \N$
such that $\alpha^n(x)\in U$
for all integers $n\geq n_0$ and
%
\begin{equation}\label{asymp}
\lim_{n\to\infty} \frac{\|\phi(\alpha^n(x))\|}{a^n}\;=\; 0\,.
\end{equation}
\end{numba}
It is clear from the definition
that $W^s_a(M,z)$ is
an $\alpha$-stable subset of~$M$.
The following facts are proved
in~\cite{STA}:
%
\begin{numba}\label{ismfd}
\emph{For each $a\in \; ]0,1[\, \setminus \{r_1,\ldots, r_n\}$,
the set $W^s_a(M,z)$ is an immersed $C^k$-submanifold
of~$M$.} It is called the \emph{$a$-stable
manifold of~$M$ around~$z$}.
\end{numba}
%
\begin{numba}\label{ismfd2}
\emph{If $\{r_1,\ldots, r_n\}\sub \;]0,1]$,
then $W^s_a(M,z)$ is a $C^k$-submanifold of~$M$,
for each $a\in \; ]0,1[\, \setminus \{r_1,\ldots, r_n\}$.}
\end{numba}
%
\begin{numba}\label{nomatt}
If $0<a<b<1$
and $[a,b]\cap \{r_1,\ldots, r_n\}=\emptyset$,
then
$W^s_a(M,z)= W^s_b(M,z)$.
\end{numba}
%
%
\begin{numba}\label{whentriv}
\emph{If $a\in \; ]0,r_1[$, then
$W^s_a(M,z)=\{z\}$.}\,\Punkt
\end{numba}
%
%
\begin{prop}\label{nice2know}
Let
$k\in \N\cup\{\infty,\omega\}$
and $(\K,|.|)$
be a complete ultrametric field.
Let $G$ be a $C^k$-Lie group over~$\K$
and $\alpha\colon G\to G$ be
a $C^k$-automorphism.
Assume that $a\in \;]0,1[$
is not a characteristic value
of~$L(\alpha)$.
Then the $a$-stable
manifold
$W_a^s(G,1)$ is an immersed
$C^k_\K$-Lie subgroup of~$G$.
If $\alpha$ is a contractive
$C^k$-automorphism, then
$W_a^s(G,1)$ is a $C^k_\K$-Lie subgroup of~$G$.
\end{prop}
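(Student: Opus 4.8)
The plan is to realize $W:=W^s_a(G,1)$ as a subgroup carrying the immersed structure from {\bf\ref{ismfd}}, and to obtain the $C^k$-regularity of the group operations from the behaviour of the stable-manifold construction under $\alpha$-equivariant maps. By {\bf\ref{ismfd}} the set $W$ is an immersed $C^k$-submanifold of~$G$, and it is $\alpha$-stable (as noted after {\bf\ref{setsta}}). It therefore remains to check that $W$ is a subgroup and that multiplication and inversion restrict to $C^k$-maps for the stable-manifold structure; together these say exactly that $W$ is an immersed $C^k_\K$-Lie subgroup.

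For the subgroup property I would argue set-theoretically first. Clearly $1\in W$. Given $x,y\in W$, note that $(\ref{asymp})$ with $a<1$ forces $\alpha^n(x)\to 1$ and $\alpha^n(y)\to 1$, so in a chart $\phi$ as in {\bf\ref{setsta}} their images eventually lie in the domain of the local multiplication $\mu_\phi(u,v):=\phi(\phi^{-1}(u)\phi^{-1}(v))$. This map is strictly differentiable at $(0,0)$ with derivative $(u,v)\mapsto u+v$, and since $\|.\|$ is ultrametric this yields $\|\mu_\phi(u,v)\|\le\max\{\|u\|,\|v\|\}$ for $u,v$ near~$0$. As $\alpha^n(xy)=\alpha^n(x)\alpha^n(y)$, dividing by $a^n$ and invoking $(\ref{asymp})$ for $x$ and $y$ gives $xy\in W$. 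The same computation with the local inversion (derivative $-\id$ at~$0$, hence norm-preserving on small arguments) gives $x^{-1}\in W$. Hence $W\leq G$ as an abstract subgroup.

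To upgrade this to $C^k$-regularity I would use a product trick. Equip $G\times G$ with $\alpha\times\alpha$; its fixed point is $(1,1)$ and its tangent map is $L(\alpha)\oplus L(\alpha)$, whose generalized eigenspace decomposition shows it has the same characteristic values $r_1<\cdots<r_n$ as~$\alpha$, so $a$ is admissible for $G\times G$ as well. Computing $(\ref{asymp})$ in the maximum norm on $L(G)\oplus L(G)$ identifies $W^s_a(G\times G,(1,1))=W\times W$, and I would note that this identification holds also at the level of immersed $C^k$-submanifolds (the stable manifold of a product with product dynamics being the product of the stable manifolds). Now $\mu\colon G\times G\to G$ and $\iota\colon G\to G$ are $C^k$-maps fixing the respective base points and satisfying $\mu\circ(\alpha\times\alpha)=\alpha\circ\mu$ and $\iota\circ\alpha=\alpha\circ\iota$. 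By the functoriality of the construction in~\cite{STA} — an $\alpha$-equivariant $C^k$-map sending a fixed point to a fixed point induces a $C^k$-map of the associated stable manifolds — one obtains $C^k$-maps $W\times W\to W$ and $W\to W$ extending the group operations, so that $W$ is an immersed $C^k_\K$-Lie subgroup. For the second claim, assume $\alpha$ is contractive. By {\bf\ref{henceappl}} every eigenvalue of $L(\alpha)$ in $\wb{\K}$ has absolute value $<1$, so $\{r_1,\ldots,r_n\}=R(L(\alpha))\sub\;]0,1[\,\sub\;]0,1]$; hence {\bf\ref{ismfd2}} applies and $W$ is in fact a $C^k$-submanifold of~$G$. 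Combined with the subgroup structure just obtained, this makes $W$ a $C^k_\K$-Lie subgroup (automatically closed, as recorded in {\bf\ref{convcalc}}).

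I expect the genuine obstacle to be the $C^k$-regularity of the induced operations, that is, justifying both the naturality statement that $\alpha$-equivariant $C^k$-maps induce $C^k$-maps of stable manifolds and the identification of $W^s_a(G\times G,(1,1))$ with $W\times W$ as immersed submanifolds. The set-theoretic subgroup property is elementary once $(\ref{asymp})$ is combined with the strict differentiability of the chart-level operations; the real work lies in transporting this to the immersed $C^k$-structure furnished by~\cite{STA}.
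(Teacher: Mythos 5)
Your set-theoretic subgroup argument and your treatment of the contractive case are sound and coincide with what the paper does: the estimate $\|\mu_\phi(u,v)\|\le\max\{\|u\|,\|v\|\}$ obtained from strict differentiability of the chart-level operations, combined with (\ref{asymp}), is exactly the mechanism used in the paper (there in the form $\|xy^{-1}-(x-y)\|\le\|(x,y)\|$), and the passage from {\bf\ref{henceappl}} to {\bf\ref{ismfd2}} for the final claim is identical. The problem is the middle step. You reduce the $C^k$-regularity of the induced operations to two assertions about the construction in~\cite{STA} --- that $W^s_a(G\times G,(1,1))=W^s_a(G,1)\times W^s_a(G,1)$ \emph{as immersed $C^k$-submanifolds}, and that an $(\alpha\times\alpha,\alpha)$-equivariant $C^k$-map induces a $C^k$-map of the associated stable manifolds --- neither of which is among the facts {\bf\ref{ismfd}}--{\bf\ref{whentriv}} that the paper imports from~\cite{STA}. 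You yourself flag this as ``the real work,'' and indeed it is: without a proof of this naturality (which would require opening up the graph-transform/Irwin construction and tracking the immersed charts), the argument is incomplete at precisely the point where the proposition has content. Note also that equivariance alone does not formally suffice; one must control the map with respect to the possibly finer intrinsic topologies of the immersed submanifolds on both sides.

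The paper avoids any appeal to functoriality by exploiting one additional structural fact about the construction: there is an $\alpha$-invariant open subset $\Gamma\sub H:=W^s_a(G,1)$ (the local $a$-stable manifold) which is an honest $C^k$-\emph{submanifold} of~$G$ and satisfies $H=\bigcup_{n\in\N_0}\alpha^{-n}(\Gamma)$, and for a suitable chart $\Gamma$ is described explicitly as the set of $x\in U$ with $\alpha^n(x)\in U$, $\|\phi(\alpha^n(x))\|\le a^n$ for all $n\in\N_0$, and $\|\phi(\alpha^n(x))\|/a^n\to 0$. The same ultrametric estimate you use for the subgroup property shows that this condition is stable under $(x,y)\mto xy^{-1}$, so $\Gamma$ is a subgroup; being simultaneously a subgroup and a $C^k$-submanifold of~$G$, it is a $C^k$-Lie subgroup with the induced structure, and so is each $\alpha^{-n}(\Gamma)$. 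Since these are open in the immersed manifold~$H$ and exhaust it, the group operations of~$H$ are $C^k$ on an open cover, hence $C^k$. If you want to salvage your route, you would either have to prove the naturality statement from the construction in~\cite{STA}, or switch to this local-stable-manifold argument, which only uses that $\Gamma$ is a submanifold of~$G$ cut out by the condition above.
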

\begin{proof}
We first show that $H:=W^s_a(G,1)$
is a subgroup of~$G$.
To this end, we pick a chart $\phi\colon
U\to V\sub T_1(G)=L(G)$
as in {\bf \ref{setsta}}
and an ultrametric norm~$\|.\|$ on
$L(G)$; we use the same symbol, $\|.\|$,
for the corresponding maximum norm on
$L(G)\times
L(G)$.
After shrinking~$U$,
we may assume that~$U$ is a subgroup
of~$G$ and give~$V$ the group
structure making~$\phi$ a homomorphism
(see \cite[Proposition~2.1]{ANA}).
After shrinking~$U$ further,
we may assume that
%
\begin{equation}\label{use2}
\|xy^{-1}-(x-y)\|\leq \|(x,y)\|\quad \mbox{for all
$x,y\in V$,}
\end{equation}
because $h\colon V\times V\to V$, $(x,y)\mto xy^{-1}$
is totally differentiable at $(0,0)$
with $h'(0,0)\colon L(G)\times L(G)\to L(G)$,
$(u,v)\mto u-v$.\\[2.5mm]
If $x,y\in H$,
there exists $n_0\in \N$ such that
$\alpha^n(x), \alpha^n(y)\in U$
for all $n\geq n_0$
and $\|\phi(\alpha^n(x))\|/a^n,\|\phi(\alpha^n(y))\|/a^n\to 0$.
Then $\alpha^n(xy^{-1})=\alpha^n(x)\alpha^n(y)^{-1}
\in UU^{-1}=U$ and
\begin{eqnarray*}
\lefteqn{\|\phi(\alpha^n(xy^{-1}))\|/a^n\;=\;
\|\phi(\alpha^n(x))\phi(\alpha^n(y))^{-1}\|/a^n}\\
& \leq & \max\{\|\phi(\alpha^n(x))-\phi(\alpha^n(y))\|,
\|(\phi(\alpha^n(x)),\phi(\alpha^n(y))^{-1})\|\}/a^n\\
& = & \max\{\|\phi(\alpha^n(x))\|/a^n,\|\phi(\alpha^n(y))\|/a^n\}
\; \to \; 0
\end{eqnarray*}
as $n\to\infty$,
showing that $xy^{-1}\in H$. Hence~$H$ is a subgroup indeed.\\[2.5mm]
To see that~$H$ is an immersed $C^k$-Lie subgroup,
we recall from the construction of $a$-stable
manifolds that $\alpha|_H\colon H\to H$
is a $C^k$-diffeomorphism
and that there
is an
$\alpha$-invariant
open subset $\Gamma \sub H$
(a ``local $a$-stable manifold'')
such that
$H=\bigcup_{n\in \N_0}\alpha^{-n}(\Gamma)$
and~$\Gamma$ is a submanifold of~$G$.
For a suitable choice of the chart
$\phi\colon U\to V$ and
ultrametric norm $\|.\|$ on $L(G)$
above,
one has $V=B_r \sub L(G)$
for some $r>0$ in the construction
and the set~$\Gamma$ consists of
all $x\in U$
such that
\begin{itemize}
\item[($\diamondsuit$)]
$\alpha^n(x)\in U$
for all $n\in \N_0$,
$\|\phi(\alpha^n(x))\|\leq a^n$
for all $n\in \N_0$,
and $\|\phi(\alpha^n(x))\|/a^n\to 0$
as $n\to\infty$
\end{itemize}
(see~\cite{STA}).
After shrinking~$U$ (and~$r$)
if necessary, we may assume
that~$U$ is a subgroup of~$G$
and the estimates~(\ref{use2}) hold.
Given $x,y\in \Gamma$,
we can use (\ref{use2}) as above
to see that also $xy^{-1}$
satisfies the conditions~($\diamondsuit$),
and hence $xy^{-1}\in \Gamma$.
Thus~$\Gamma$ is a subgroup
of~$G$ and hence a $C^k$-Lie subgroup.
As a consequence, also
$\alpha^{-n}(\Gamma)$ is a $C^k$-Lie
subgroup of~$G$.
Since each $\alpha^{-n}(\Gamma)$
is an open $C^k$-submanifold
of the $a$-stable manifold~$H$
and $H=\bigcup_{n\in \N_0}\alpha^{-n}(\Gamma)$,
it follows that the group
operations of~$H$ are $C^k$
on an open cover and hence~$C^k$.
Thus~$H$ is an immersed $C^k$-Lie subgroup
of~$G$.\\[2.5mm]
If $\alpha$ is contractive,
then $R(\alpha)\sub \;]0,1[$
(see {\bf\ref{henceappl}}).
Hence $H=W^s_a(G,1)$
is a $C^k$-submanifold
of~$G$ (by {\bf\ref{ismfd2}})
and therefore a $C^k$-Lie subgroup.
\end{proof}
Given subsets $X,Y$ of a group~$G$,
we set $[X,Y]:=\{xyx^{-1}y^{-1}\colon
x\in X, y\in Y\}$.
%
%
\begin{la}\label{bettappr}
Let $G$ be a $C^2$-Lie group
over a complete ultrametric field\linebreak
$(\K,|.|)$.
Let $\alpha\colon G\to G$ be
a $C^2$-automorphism
and assume that $a,b\in \;]0,1[$
as well as $ab$ are not characteristic values
of~$L(\alpha)$.
Then
\[
[W_a^s(G,1), W_b^s(G,1)]\;\sub\;
W_{ab}^s(G,1)\,.
\]
\end{la}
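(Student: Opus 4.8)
The plan is to move the problem into a single chart and reduce it to a bilinear estimate on the commutator map, much as in the subgroup part of the proof of Proposition~\ref{nice2know}. First I would choose a chart $\phi\colon U\to V\sub L(G)$ with $\phi(1)=0$, $T_1(\phi)=\id$, and (after shrinking, using \cite[Proposition~2.1]{ANA}) with $U$ an open subgroup of~$G$; fix an ultrametric norm $\|.\|$ on $L(G)$. Since multiplication and inversion are continuous and fix~$1$, there is an open identity neighbourhood $U'\sub U$ on which the coordinate commutator map
\[
c(x,y):=\phi\bigl(\phi^{-1}(x)\,\phi^{-1}(y)\,\phi^{-1}(x)^{-1}\,\phi^{-1}(y)^{-1}\bigr)
\]
is defined and takes values in~$V$, for all $x,y\in\phi(U')$. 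As $G$ is a $C^2$-Lie group, $c$ is a $C^2$-map, and since $\phi^{-1}(0)=1$ we have $c(x,0)=0$ and $c(0,y)=0$ for all $x,y$.

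The crux is the bilinear estimate
\[
\|c(x,y)\|\;\le\; C\,\|x\|\,\|y\|
\]
on a neighbourhood of $(0,0)$, for some constant $C>0$. This is exactly where the hypothesis $k\ge 2$ enters, and I expect it to be the main obstacle. The naive bound $\|c(x,y)\|\le C\,\|(x,y)\|^2$ coming merely from $c'(0,0)=0$ is \emph{not} sufficient: after dividing by $(ab)^n$ it would produce a runaway factor $(b/a)^n$ and fail when $a\ne b$. Instead one must exploit that $c$ vanishes on \emph{both} coordinate axes. Because $c(x,0)=0$, a Hadamard-type lemma (in the form available for the differential calculus used here; see the references in {\bf\ref{convcalc}}) lets me factor $c(x,y)=g(x,y).y$ for a $C^1$ map $g$, and since $c(0,\cdot)\equiv 0$ one checks $g(0,y)=0$; applying the factorization once more in the first variable — this second step is what forces $g\in C^1$, hence $c\in C^2$ — yields $c(x,y)=\bigl(h(x,y).x\bigr).y$ for a continuous map~$h$. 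Bounding $\|h\|$ by $C$ on a small ball gives the claim.

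With the estimate in hand, the conclusion is a short computation. Let $x\in W_a^s(G,1)$ and $y\in W_b^s(G,1)$; note $ab\in\;]0,1[$ and, by hypothesis, $ab$ is not a characteristic value of $L(\alpha)$, so $W_{ab}^s(G,1)$ is defined. Since $\alpha^n(x),\alpha^n(y)\to 1$, for large~$n$ both lie in~$U'$, hence so does their commutator, and $\alpha$ being an automorphism gives $\alpha^n(xyx^{-1}y^{-1})=\alpha^n(x)\,\alpha^n(y)\,\alpha^n(x)^{-1}\,\alpha^n(y)^{-1}$, whence $\phi(\alpha^n(xyx^{-1}y^{-1}))=c\bigl(\phi(\alpha^n(x)),\phi(\alpha^n(y))\bigr)$. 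The bilinear estimate then yields
\[
\frac{\|\phi(\alpha^n(xyx^{-1}y^{-1}))\|}{(ab)^n}\;\le\; C\,\frac{\|\phi(\alpha^n(x))\|}{a^n}\cdot\frac{\|\phi(\alpha^n(y))\|}{b^n}\;\longrightarrow\;0
\]
as $n\to\infty$, because $x\in W_a^s(G,1)$ and $y\in W_b^s(G,1)$. Thus $xyx^{-1}y^{-1}\in W_{ab}^s(G,1)$, and since $[W_a^s(G,1),W_b^s(G,1)]$ consists precisely of such commutators, the claim follows.
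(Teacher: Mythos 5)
Your argument is essentially the paper's own proof: the same chart reduction, the same bilinear estimate $\|f(x,y)\|\le C\,\|x\|\,\|y\|$ for the coordinate commutator map (which the paper simply cites as \cite[Lemma~1.7]{ANA}, and which your Hadamard-type factorization correctly identifies as the point where $C^2$ is needed), and the same division by $(ab)^n$ to conclude. The proposal is correct.
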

\begin{proof}
We pick a chart $\phi\colon U\to V\sub L(G)$
of~$G$ around~$1$
such that $\phi(1)=0$
and $T_1(\phi)=\id_{L(G)}$.
After shrinking~$U$ further, we may
assume that~$U$ is a subgroup
of~$G$. We give~$V$ the group
structure making~$\phi$ an isomorphism.
Then~$V$ is a $C^2$-Lie group.
The commutator map
$f\colon V\times V\to V$,
$f(x,y)=xyx^{-1}y^{-1}$
is $C^2$ and satisfies $f(x,0)=f(0,y)=0$.
Hence, by \cite[Lemma~1.7]{ANA},
after shrinking~$V$ there exists $C>0$
such that
\[
\|f(x,y)\|\;\leq\; C\, \|x\|\cdot\|y\|\quad
\mbox{for all $x,y\in V$.}
\]
Given $x\in W^s_a(G,1)$ and $y\in W^s_b(G,1)$,
there exists $n_0\in \N$
such that $\alpha^n(x),\alpha^n(y)\in U$ for all $n\geq n_0$.
Then
\begin{eqnarray*}
\frac{\|\phi(\alpha^n(xyx^{-1}y^{-1}))\|}{(ab)^n}
& = &
\frac{\|f(\phi(\alpha^n(x)),\phi(\alpha^n(y)))\|}{(ab)^n}\\
& \leq &
C\, \frac{\|\phi(\alpha^n(x))\|}{a^n}
\, \frac{\|\phi(\alpha^n(y))\|}{b^n}
\; \to \; 0
\end{eqnarray*}
as $n\to\infty$ (see (\ref{asymp})),
and thus $xyx^{-1}y^{-1}\in W^s_{ab}(G,1)$.
\end{proof}
{\bf Proof of Theorem~B.}
We may assume that $G\not=\{1\}$.
Since $\alpha$ is contractive,
it follows that $R(L(\alpha))\sub \; ]0,1[$
(see {\bf\ref{henceappl}}).
Let $0<r_1<\cdots< r_m<1$
be the characteristic
values of~$L(\alpha)$.
Pick $a_m\in \;]r_m,1[$.
Next,
for $j\in \{1,\ldots, m-1\}$,
pick $a_j\in \;]r_j,r_{j+1}[$
so small that $a_ja_m<r_j$,
and such that $a_ja_i\not\in \{r_1,\ldots, r_m\}$ for all
$i\geq j$.
Set $a_0:=a_1a_n$.
By Proposition~\ref{nice2know},
$G_j:=W^s_{a_j}(G,1)$
is a $C^k$-Lie subgroup
of~$G$, for $j\in \{0,1,\ldots, m\}$.
Furthermore, each~$G_j$ is $\alpha$-stable,
and $G_0=\{1\}$, by {\bf\ref{whentriv}}.
Also, $G_m=G$ (cf.\ \cite{STA}).
By Lemma~\ref{bettappr}, we have
\[
[G,G_j]\; =\; [W^s_{a_m}(G,1),W^s_{a_j}(G,1)]
\; \sub \; W^s_{a_ma_j}(G,1)
\; \sub \; W^s_{a_{j-1}}(G,1)\;=\; G_{j-1}
\]
for $j\in \{1,\ldots, m\}$.
Hence each $G_j$ is normal in~$G$
and $G_j/G_{j-1}$ is contained in the
centre of~$G/G_{j-1}$,
showing that
$\one=G_0\triangleleft G_1\triangleleft\cdots
\triangleleft G_m=G$ is a central series.
In particular, $G$ is nilpotent (see \cite[p.\,122]{Rob}).\,\vspace{2mm}\Punkt

\noindent
If~$\alpha$ merely is a contractive $C^1$-automorphism
in Theorem~B, the preceding proof
still provides a central
series of~$C^1$-Lie subgroups
(it is only essential that the commutator
map~$f$ is~$C^2$).
\section{From contractible Lie algebras
to\\
contractible Lie groups}\label{seclocglob}
%
%
In this section,
we discuss
the passage from contractive Lie algebra
automorphisms to contractive
Lie group automorphisms.
We begin with a result
which subsumes
Theorem~C from the introduction
(when specialized to characteristic~$0$).
Afterwards, we work towards
a category-theoretic refinement
of Theorem~C: an
equivalence between
the category of analytic Lie contraction groups
and the category of Lie algebra-contraction pairs.
%
%
\begin{prop}\label{gradthm}
Let $(\K,|.|)$ be a complete
ultrametric field,
$k\in \N\cup \{\infty,\omega\}$
such that $k\geq 3$,
$\cg$ be a Lie algebra over~$\K$
and $\beta\colon \cg\to\cg$
be a contractive Lie algebra
automorphism.
We make the following
assumption:
\begin{itemize}
\item[$(*)$]
There exists a $C^k$-Lie group~$V$
with $L(V)=\cg$,
an open subgroup~$U\sub V$
and a
$C^k$-homomorphism
$\gamma \colon U\to V$
such that $L(\gamma)=\beta$.
\end{itemize}
Then there exists
a $C^k$-Lie group~$G$
and a contractive $C^k$-auto\-mor\-phism
$\alpha\colon G\to G$ such that
$L(\alpha)=\beta$.
If $\car(\K)=0$,
then condition~$(*)$
is automatically satisfied.
If $\car(\K)=0$ and $k=\omega$,
then furthermore $(G,\alpha)$ is
unique up to isomorphism.
\end{prop}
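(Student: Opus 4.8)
The plan is to build $G$ directly from the data in $(*)$ as an ascending HNN-extension (equivalently, a directed union) of $U$ along $\gamma$, so that $\gamma$ becomes the restriction to an open subgroup of the resulting contraction automorphism. First I would normalize the hypothesis. Since $\beta=L(\gamma)$ is contractive, by {\bf\ref{henceappl}} there is an ultrametric norm on $\cg$ with $\|\beta\|_{\op}<1$, and Lemma~\ref{prelem} (applied to the $C^1$-homomorphism $\gamma\colon U\to V$ in the role of $\alpha$) produces a chart and subgroups $U_s=\phi^{-1}(B_s)$ with $U_{\theta s}\sub\gamma(U_s)\sub U_{\Theta s}$. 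Hence, after replacing both $U$ and $V$ by a sufficiently small open subgroup of the form $U_r$ (which still has Lie algebra $\cg$) and $\gamma$ by its restriction, I may assume that $\gamma\colon U\to U$ is an injective $C^k$-homomorphism and a $C^k$-diffeomorphism onto the open subgroup $\gamma(U)\sub U$; injectivity on the small ball is automatic from $\gamma'(0)=\beta$ being a linear isomorphism, exactly via the inverse function argument already used in the proof of Lemma~\ref{prelem}.

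Next I would form the directed system $U\xrightarrow{\gamma}U\xrightarrow{\gamma}U\to\cdots$ and set $G:=\varinjlim(U,\gamma)$, with canonical homomorphisms $j_n\colon U\to G$ satisfying $j_{n+1}\circ\gamma=j_n$. Because every $\gamma$ is injective, each $j_n$ is injective, $G=\bigcup_n j_n(U)$ is an increasing union of open subgroups $G_n:=j_n(U)\isom U$, and the transition $G_n\hookrightarrow G_{n+1}$ is the open $C^k$-embedding $\gamma$. Transporting the $C^k$-structure of $U$ along each $j_n$ then yields a well-defined $C^k$-manifold structure on $G$ for which the $G_n$ are open submanifolds and the group operations are $C^k$ (they are $C^k$ on the open cover $\{G_n\}$, since each $G_n$ is a subgroup modeled on $U$). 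I define $\alpha\colon G\to G$ by $\alpha(j_n(u)):=j_n(\gamma(u))$; this is well-defined, a bijective homomorphism with inverse $j_n(u)\mapsto j_{n+1}(u)$, and a $C^k$-automorphism since on each $G_n$ it is a copy of $\gamma$. As $\alpha|_{G_0}=\gamma$ on the open subgroup $G_0\isom U$, one gets $L(\alpha)=L(\gamma)=\beta$, and $\alpha$ is contractive because $\alpha^m(j_n(u))=j_n(\gamma^m(u))\to 1$. This settles existence; the main obstacle is precisely the bookkeeping in this step, i.e.\ checking that the transported charts are compatible across the telescope and that multiplication and $\alpha$ are genuinely $C^k$, all of which reduces to $\gamma$ being an open embedding of $C^k$-Lie groups.

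To remove hypothesis $(*)$ when $\car(\K)=0$, I would invoke the Baker--Campbell--Hausdorff construction (see~\cite{Ser},~\cite{Bo2}): on a sufficiently small ball $B=B_s\sub\cg$ the BCH series converges and makes $V:=(B,*)$ a $C^\omega$-Lie group with $L(V)=\cg$. Shrinking $B$ so that $\beta(B)\sub B$ (possible as $\|\beta\|_{\op}<1$) and using that $\beta$ is a Lie algebra automorphism, hence commutes with the bracket-built operation $*$, the restriction $\gamma:=\beta|_B\colon B\to B$ is a $C^\omega$-homomorphism with $L(\gamma)=\beta$; thus $(*)$ holds with $U=V$.

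Finally, for uniqueness when $\car(\K)=0$ and $k=\omega$, let $(G,\alpha)$ and $(G',\alpha')$ both solve the problem. The decisive rigidity is that in the analytic characteristic-$0$ setting the group germ at $1$ is determined by $\cg$ via BCH, and a local analytic homomorphism is determined by its linearization; hence in exponential coordinates both $\alpha$ and $\alpha'$ coincide with $\beta$, yielding an analytic local isomorphism $\psi$ of a neighbourhood $W$ of $1$ in $G$ onto one in $G'$ with $L(\psi)=\id_\cg$ and $\psi\circ\alpha=\alpha'\circ\psi$. Because $\alpha$ is contractive, $G=\bigcup_n\alpha^{-n}(W)$, so I would propagate $\psi$ by setting $\Psi(x):=(\alpha')^{-n}(\psi(\alpha^n(x)))$ for $x\in\alpha^{-n}(W)$; the local intertwining property makes this independent of $n$, and one checks that $\Psi$ is a $C^\omega$-isomorphism with $\Psi\circ\alpha=\alpha'\circ\Psi$. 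I expect the subtle point here to be verifying the rigidity of the germ together with the well-definedness of $\Psi$, and this is also exactly where analyticity and $\car(\K)=0$ are indispensable---in positive characteristic the germ is no longer determined by $(\cg,\beta)$, which is why uniqueness fails there (cf.\ Example~\ref{ex221}).
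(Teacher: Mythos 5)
Your proof is correct and, apart from packaging, follows the paper's route: the normalization via {\bf\ref{henceappl}} and Lemma~\ref{prelem} (making $\gamma$ a contractive open $C^k$-embedding $U\to U$), the BCH argument for removing $(*)$ in characteristic~$0$, and the uniqueness argument (local rigidity of the germ plus propagation $\Psi(x)=(\alpha')^{-n}(\psi(\alpha^n(x)))$) are exactly what the paper does --- the last step is precisely the combination of Lemma~\ref{integr} with Lemma~\ref{extloc}. The one genuine difference is in the existence step: the paper forms the HNN-extension $W$ of $V$ along $\gamma|_U\colon U\to\gamma(U)$, takes the inner automorphism $\alpha(x)=wxw^{-1}$, and sets $G:=\bigcup_n\alpha^{-n}(U)$; you instead build $G$ directly as the mapping telescope $\varinjlim(U\xrightarrow{\gamma}U\xrightarrow{\gamma}\cdots)$. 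These produce canonically isomorphic groups (inside $W$ one has $G=\bigcup_n w^{-n}Uw^n$, which is that colimit), so nothing is lost; your version is arguably more self-contained, since it avoids citing the existence and normal form of HNN-extensions and only needs the standard fact that a colimit of injective group homomorphisms over $\N$ has injective structure maps, while the paper's version gets the automorphism $\alpha$ for free as conjugation by the stable letter rather than having to verify well-definedness and multiplicativity of $\alpha$ on the telescope by hand. Two small points to make explicit when writing this up: in the uniqueness step the neighbourhood $W$ on which $\psi$ is defined should be taken to be an $\alpha$-invariant open \emph{subgroup} (available by Lemma~\ref{prelem}), so that $\Psi$ is well defined and multiplicative; and in the telescope step one should note that each $j_n\colon U\to G_n$ is a homeomorphism onto an open subgroup, so that contractivity of $\gamma|_U$ transfers to $\alpha$.
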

\begin{proof}
By {\bf\ref{contrdec}},
there exists an ultrametric
norm~$\|.\|$ on $\cg$ such that
$\Theta:=\|\beta\|_{\op}<1$.
Hence Lemma~\ref{prelem}
applies to $\gamma\colon U\to V$,
and ensures that
after shrinking~$U$,
there is $r>0$ and a chart
$\phi\colon U\to B_r\sub \cg$
with $\phi(1)=0$
and $T_1(\phi)=\id_\cg$
such that $\gamma(U_s)\sub U_{\Theta s}$
for each $s\in \;]0,r]$,
with $U_s:=\phi^{-1}(B_s)$.
In particular, this implies that
$\gamma(U)\sub U$
and that $\{\gamma^n(U)\colon
n\in \N_0\}$ is a basis of $\gamma$-stable
identity neighbourhoods in~$U$
(since $\gamma^n(U)\sub U_{\Theta^n r}$).
Hence $\gamma|_U\colon U\to U$
is contractive.
Since $T_1(\gamma)=\beta$ is invertible,
after shrinking~$r$ if necessary
we may assume
that also $\gamma(U)$ is open
in~$U$ and $\gamma\colon U\to \gamma(U)$
is a $C^k$-diffeomorphism
(using the Inverse Function Theorem
\cite[Theorem~5.1]{IM2}, resp.,
\cite[Part~II, Chapter~III, \S9, Theorem~2]{Ser}).\\[2.5mm]
The group $V$ together with the isomorphism
$\gamma|_U \colon U\to \gamma(U)$ between its subgroups
gives rise to an HNN-extension~$W$.
This is a group~$W$ which contains~$V$
as a subgroup and has an element $w \in W$
such that
\[
wxw^{-1}\;=\; \gamma(x)\quad\mbox{for all $\,x\in U$}
\]
(see, e.g., 6.4.5 and the remarks following it
in \cite{Rob}).
Consider the inner automorphism
$\alpha \colon W\to W$, $\alpha(x):=wxw^{-1}$.
Then $\alpha(U)\sub U$
and $\alpha|_U$ is $C^k$.
Furthermore, $\alpha^{-1}|_{\alpha(U)}$
is $C^k$ on the open identity neighbourhood
$\alpha(U) \sub U$.
Since~$U$ is a $C^k$-Lie group,
standard arguments now provide
a unique $C^k$-Lie group
structure on
the subgroup $H:=\langle U\cup\{w\}\rangle\leq W$
generated by~$U$ and~$w$ which makes~$U$
an open $C^k$-submanifold of~$H$.
Since $\alpha(U)=\gamma(U)\sub U$,
it follows that $G:=\bigcup_{n\in \N}\alpha^{-n}(U)\sub H$
is an open subgroup of~$H$
and $\alpha|_G\colon G\to G$
a contractive automorphism
with $L(\alpha|_G)=L(\gamma)=\beta$.\\[2.5mm]
If $\car(\K)=0$,
we choose an ultrametric
norm $\|.\|$ on~$\cg$ such that
$\|\beta\|_{\op}<1$.
For some $t>0$,
the Baker-Campbell-Hausdorff (BCH-)
series
then converges on $B_t\times B_t$
(where $B_t:=B_t^\cg(0)$)
to a function taking its values in~$B_t$,
and making $U:=B_t$
a $\K$-analytic Lie group
with $L(U)=\cg$
(see Lemma~3 in \cite[Chapter~3, \S4, no.\,2]{Bo2}).
Then $V:=U$ together with
$\gamma:=\beta|_U$ satisfies condition~($*$).
The uniqueness assertion
is covered by Lemma~\ref{integr}
below.
\end{proof}
If $\car(\K)>0$, then
an analytic Lie contraction group
$(G,\alpha)$ need not be determined
by $(L(G),L(\alpha))$ (see Example~\ref{ex221}).
%
%
\begin{rem}\label{prefin}
The preceding proof
shows that if~($*$) holds,
then after shrinking~$U$ we can assume that
\begin{itemize}
\item[($**$)]
There is a $C^k$-Lie group~$U$
and a $C^k$-homomorphism $\gamma\colon U\to U$
with open image
such that
$L(\gamma)=\beta$
and $\gamma\colon U\to \gamma(U)$ is
a $C^k$-diffeomor\-phism with
$\{\gamma^n(U)\colon n\in \N_0\}$
a basis of identity neighbourhoods in~$U$.
\end{itemize}
In this case, $(G,\alpha)$ can be chosen
such that $U$ is an open subgroup of~$G$
and $\alpha|_U=\gamma$.
\end{rem}
The following lemma is a variant of \cite[Proposition~2.2]{Wan}.
%
%
\begin{la}\label{extloc}
Let $(G_1,\alpha_1)$
be a contraction group,
$U\sub G_1$ be an $\alpha_1$-invariant
open subgroup,
$G_2$ be a group,
$\alpha_2$
be an automorphism of~$G_2$
and $g\colon U\to G_2$ be a homomorphism
such that $\alpha_2\circ g=g\circ \alpha_1|_U$.
Then~$g$ extends uniquely
to a homomorphism $h \colon G_1\to G_2$
such that $\alpha_2\circ h =h \circ \alpha_1$.
If also $(G_2,\alpha_2)$
is a contraction group,
$g(U)$ open
and $g \colon U\to g(U)$
a homeomorphism,
then~$h$ is an isomorphism
of topological groups.
\end{la}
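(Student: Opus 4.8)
The plan is to pin down $h$ by the only formula the hypotheses allow, and then to verify the required properties using repeatedly that contractivity forces every element of $G_1$ into $U$. First I would note that the open subgroup $U$ is an identity neighbourhood, so for each $x\in G_1$ the condition $\alpha_1^n(x)\to 1$ yields some $n\in\N_0$ with $\alpha_1^n(x)\in U$; since $U$ is $\alpha_1$-invariant, in fact $\alpha_1^{n'}(x)\in U$ for all $n'\ge n$. If $h$ is any extension of $g$ with $\alpha_2\circ h=h\circ\alpha_1$, then $\alpha_2^n\bigl(h(x)\bigr)=h\bigl(\alpha_1^n(x)\bigr)=g\bigl(\alpha_1^n(x)\bigr)$, forcing
\[
h(x)\;=\;\alpha_2^{-n}\bigl(g(\alpha_1^n(x))\bigr).
\]
This already settles uniqueness, and I take the displayed formula as the definition of $h$.

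Next I would check well-definedness: for $n\le m$ with $\alpha_1^n(x),\alpha_1^m(x)\in U$, put $y:=\alpha_1^n(x)\in U$ and iterate $\alpha_2\circ g=g\circ\alpha_1|_U$ on the $\alpha_1$-invariant subgroup $U$ to get $g\bigl(\alpha_1^{m-n}(y)\bigr)=\alpha_2^{m-n}\bigl(g(y)\bigr)$; applying $\alpha_2^{-m}$ yields the two expressions for $h(x)$. Taking $n=0$ shows $h|_U=g$. That $h$ is a homomorphism and satisfies $\alpha_2\circ h=h\circ\alpha_1$ then follows by choosing a single large $n$ bringing all relevant elements into the subgroup $U$ and using that $g$, $\alpha_1$ and $\alpha_2$ are homomorphisms; these are routine once well-definedness is in place.

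For the second assertion I would argue by symmetry. First observe that $g(U)$ is an $\alpha_2$-invariant open subgroup of $G_2$: openness and the subgroup property are assumed, and invariance follows from $\alpha_2(g(U))=g(\alpha_1(U))\sub g(U)$. Since $g\colon U\to g(U)$ is a homeomorphism, $g^{-1}\colon g(U)\to U$ is a homomorphism with $\alpha_1\circ g^{-1}=g^{-1}\circ\alpha_2|_{g(U)}$, so the first part applied with the two groups interchanged produces a homomorphism $h'\colon G_2\to G_1$ extending $g^{-1}$ with $\alpha_1\circ h'=h'\circ\alpha_2$. Then $h'\circ h$ restricts to $\id_U$ on $U$ and intertwines $\alpha_1$, so uniqueness gives $h'\circ h=\id_{G_1}$, and symmetrically $h\circ h'=\id_{G_2}$; thus $h$ is bijective with inverse $h'$. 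Finally $h|_U=g$ is continuous and $U$ is an open identity neighbourhood, whence the homomorphism $h$ is continuous (continuity at $1$ propagates by translation); the same applied to $h'$ shows $h^{-1}=h'$ is continuous, so $h$ is an isomorphism of topological groups.

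The hard part will be nothing deep but rather the well-definedness step of the first paragraph: this is the only place where the $\alpha_1$-invariance of $U$ and the compatibility $\alpha_2\circ g=g\circ\alpha_1$ are genuinely exploited, and everything else is bookkeeping. In the topological part, the point deserving care is checking that $g(U)$ is really an $\alpha_2$-invariant open subgroup, since this is exactly what licenses the symmetric application of the first part and hence supplies the inverse $h'$ without having to construct it by hand.
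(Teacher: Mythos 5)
Your proposal is correct and follows essentially the same route as the paper: the same explicit formula $h(x)=\alpha_2^{-n}(g(\alpha_1^n(x)))$ on $G_1=\bigcup_n\alpha_1^{-n}(U)$, and the same symmetric application of the first assertion to $g^{-1}\colon g(U)\to G_1$ combined with the uniqueness statement to obtain the inverse. You merely spell out the well-definedness and continuity details that the paper leaves as "easy to see".
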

\begin{proof}
First assertion: The hypotheses ensure
$U\sub\alpha_1^{-1}(U)\sub \alpha^{-2}_1(U)\sub\cdots$
and $G_1=\bigcup_{n\in \N_0}\alpha^{-n}_1(U)$.
Given $x\in \alpha^{-n}_1(U)$,
we set $h(x):=\alpha^{-n}_2(g(\alpha^n_1(x)))$.
It is easy to see that~$h$
is well defined
and has
the desired properties.\\[2.5mm]
Second assertion:
Since $V:=g(U)$
satisfies $\alpha_2(V)\sub V$
and $\alpha_1\circ g^{-1}=g^{-1}\circ \alpha_2|_V$,
the first assertion yields
a homomorphism $k \colon G_2\to G_1$
such that $k|_V=g^{-1}$ and $\alpha_1\circ k=k\circ\alpha_2$.
Then $h\circ k=\id_{G_2}$ and $k\circ h=\id_{G_1}$,  
by the uniqueness assertion.
\end{proof}
%
%
\begin{la}\label{integr}
Let $(\K,|.|)$ be a complete ultrametric
field of characteristic~$0$.
Let $G_j$ be an analytic
Lie group over~$\K$
and $\alpha_j$ be a
contractive,
analytic automorphism
of~$G_j$, for $j\in \{1,2\}$.
Let $f\colon L(G_1)\to L(G_2)$
be a Lie algebra homomorphism
with $f \circ L(\alpha_1)=L(\alpha_2)\circ
f$. Then there is a
unique analytic homomorphism
$f^\wedge \colon G_1\to G_2$
such that $L(f^\wedge )= f$ and
$\alpha_2\circ f^\wedge =f^\wedge \circ
\alpha_1$.
If~$f$ is a Lie algebra
isomorphism, then~$f^\wedge$
is an analytic isomorphism.
\end{la}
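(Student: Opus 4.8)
Given a complete ultrametric field $(\K,|.|)$ of characteristic $0$, analytic Lie groups $G_1, G_2$ over $\K$ with contractive analytic automorphisms $\alpha_1, \alpha_2$, and a Lie algebra homomorphism $f: L(G_1) \to L(G_2)$ commuting with the linearizations (i.e. $f \circ L(\alpha_1) = L(\alpha_2) \circ f$), there exists a unique analytic homomorphism $f^\wedge: G_1 \to G_2$ with $L(f^\wedge) = f$ and $\alpha_2 \circ f^\wedge = f^\wedge \circ \alpha_1$; moreover if $f$ is a Lie algebra isomorphism then $f^\wedge$ is an analytic isomorphism.

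Let me think about how to prove this.

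**Setting up the key tools.** We're in characteristic $0$, so we have the Baker-Campbell-Hausdorff (BCH) series available. The paper already established this convergence in the proof of Proposition~\ref{gradthm}: for a contractive automorphism $\alpha_j$, there's an ultrametric norm $\|.\|$ on $L(G_j)$ with $\|L(\alpha_j)\|_{\op} < 1$, and on some ball $B_t$ the BCH series converges, making $B_t$ a $\K$-analytic Lie group isomorphic to an open identity neighborhood of $G_j$ via the exponential/chart.

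The core difficulty: a Lie algebra homomorphism $f$ gives us a *local* analytic homomorphism (via BCH, exponential maps commute with $f$ locally), but we need a *global* one defined on all of $G_1$. The way to go global is via the contraction structure: since $\alpha_1$ is contractive, $G_1 = \bigcup_{n} \alpha_1^{-n}(U)$ for a suitable identity neighborhood $U$.

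**The plan:**

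First, I would construct the local homomorphism $g: U \to G_2$ on a small enough identity neighborhood $U$ of $G_1$. In characteristic $0$, using BCH and the exponential coordinates (which are analytic charts around $1$ in both groups), the Lie algebra homomorphism $f$ gives rise to a local analytic homomorphism. Concretely: pick charts $\exp_1, \exp_2$ (BCH charts), and define $g = \exp_2 \circ f \circ \exp_1^{-1}$ on a small ball $U$. This is analytic because $f$ is linear (hence analytic) and BCH multiplication transforms correctly under Lie algebra homomorphisms — $f$ respects the BCH series term by term since it's a Lie algebra homomorphism. So $g$ is a local homomorphism with $L(g) = f$.

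Second, I would verify the commutation relation locally: we need $\alpha_2 \circ g = g \circ \alpha_1$ on $U$ (after shrinking $U$ so that $\alpha_1(U) \subseteq U$, which is possible because $\alpha_1$ is contractive and in fact uniformly contractive by the remarks in \ref{convaut}). In BCH coordinates, $\alpha_j$ acts as $L(\alpha_j)$ (at least, $\alpha_j$ corresponds to $L(\alpha_j)$ under the exponential chart — this requires that the exponential/BCH chart intertwines $\alpha_j$ with $L(\alpha_j)$, which holds because $\alpha_j$ is an analytic automorphism fixing $1$ and BCH coordinates are "canonical"). Then the hypothesis $f \circ L(\alpha_1) = L(\alpha_2) \circ f$ translates directly into $\alpha_2 \circ g = g \circ \alpha_1$ on the appropriate neighborhood. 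I should be careful here: the statement $\exp_j \circ L(\alpha_j) = \alpha_j \circ \exp_j$ needs justification — it follows from naturality of exp for analytic automorphisms (or one can argue via uniqueness of one-parameter-like subgroups, but in the ultrametric setting the cleanest route is that $\alpha_j$ being an analytic automorphism with $T_1(\alpha_j) = L(\alpha_j)$ must agree with $L(\alpha_j)$ in canonical coordinates, by functoriality of BCH).

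Third — and this is where the machinery of Lemma~\ref{extloc} does the heavy lifting — I would extend $g$ globally. After shrinking, $U$ is $\alpha_1$-invariant (i.e. $\alpha_1(U) \subseteq U$), open in $G_1$, and $g: U \to G_2$ is a homomorphism satisfying $\alpha_2 \circ g = g \circ \alpha_1|_U$. Now Lemma~\ref{extloc} applies verbatim and yields a unique homomorphism $f^\wedge = h: G_1 \to G_2$ extending $g$ with $\alpha_2 \circ f^\wedge = f^\wedge \circ \alpha_1$. I should check that $f^\wedge$ is analytic: since $G_1 = \bigcup_n \alpha_1^{-n}(U)$ is an open cover by the $\alpha_1^{-n}(U)$, and on each such piece $f^\wedge = \alpha_2^{-n} \circ g \circ \alpha_1^n$ is a composition of analytic maps, $f^\wedge$ is analytic on an open cover, hence analytic. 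The uniqueness in the Lemma gives uniqueness of $f^\wedge$ among homomorphisms commuting with the $\alpha_j$; but I also need uniqueness among those with $L(f^\wedge) = f$ — this follows because any two analytic homomorphisms with the same differential agree on a neighborhood of $1$ (standard: analytic homomorphisms are determined by their differential on a connected/generating neighborhood, and here the BCH chart forces agreement), and then agree globally by the contraction argument.

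**Fourth, the isomorphism statement.** If $f$ is a Lie algebra isomorphism, then $f^{-1}$ is also a Lie algebra homomorphism commuting with the linearizations ($f^{-1} \circ L(\alpha_2) = L(\alpha_1) \circ f^{-1}$), so by the first part we get $(f^{-1})^\wedge: G_2 \to G_1$. By uniqueness, $(f^{-1})^\wedge \circ f^\wedge$ and $f^\wedge \circ (f^{-1})^\wedge$ are the identity maps (each is an analytic endomorphism commuting with the relevant $\alpha$ and inducing the identity on the Lie algebra, hence equals the identity by uniqueness). Alternatively, invoke the second assertion of Lemma~\ref{extloc}: if $f$ is an isomorphism, the local $g: U \to g(U)$ is an analytic diffeomorphism onto an open subgroup $g(U) \subseteq G_2$, so Lemma~\ref{extloc}'s second assertion gives that $h = f^\wedge$ is an isomorphism of topological groups, and it's analytic in both directions by the argument above.

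**Main obstacle.** The genuinely delicate step is the second one: establishing that the BCH chart intertwines $\alpha_j$ with its linearization $L(\alpha_j)$, i.e. that $\alpha_j$ looks like the *linear* map $L(\alpha_j)$ in canonical coordinates. This is what makes the hypothesis $f \circ L(\alpha_1) = L(\alpha_2) \circ f$ usable to produce the local intertwining $\alpha_2 \circ g = g \circ \alpha_1$. In the real/$p$-adic theory this is the statement that automorphisms commute with exp; in the general complete ultrametric characteristic-$0$ setting it relies on the functoriality of the BCH construction (an analytic automorphism induces a Lie algebra automorphism, and in BCH coordinates it must act as that Lie algebra automorphism, since BCH coordinates are natural). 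Once this naturality is in hand, the rest is an assembly of the BCH local theory with the contraction-group extension Lemma~\ref{extloc}, and the proof is essentially mechanical.
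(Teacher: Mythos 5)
Your proposal is correct and follows essentially the same route as the paper: integrate $f$ to a local analytic homomorphism $g$ on a small $\alpha_1$-invariant open subgroup (the paper cites Bourbaki's theorem on local homomorphisms in characteristic~$0$ where you invoke BCH charts explicitly), verify the intertwining relation locally, extend globally and obtain uniqueness and the isomorphism statement via Lemma~\ref{extloc}. The only cosmetic difference is that the paper handles your ``main obstacle'' more directly: instead of first showing that $\alpha_j$ is linear in canonical coordinates, it applies the uniqueness of local analytic homomorphisms with a given differential to the two maps $g\circ\alpha_1|_U$ and $\alpha_2\circ g$, which have equal differentials by hypothesis and hence agree on a smaller neighbourhood.
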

\begin{proof}
By Lemma~\ref{prelem} and {\bf\ref{henceappl}},
$G_1$ has arbitrarily small
$\alpha_1$-invariant open subgroups~$U$.
By Theorem~1\,(i)
in \cite[Chapter~3, \S4, no.\,1]{Bo2},
after choosing~$U$ small enough
there exists an analytic homomorphism
$g\colon U\to G_2$ such that
$L(g)=f$. Since $L(g\circ \alpha_1|_U)=L(g)\circ L(\alpha_1)=
f\circ L(\alpha_1)=L(\alpha_2)\circ f=L(\alpha_2\circ g)$,
part\,(ii) of the theorem
just cited shows that $g\circ \alpha_1|_U=\alpha_2\circ g$,
after choosing~$U$ even smaller if necessary.
Now Lemma~\ref{extloc}
provides a unique homomorphism
$f^\wedge \colon G_1\to G_2$ such that
$\alpha_2\circ f^\wedge =f^\wedge \circ \alpha_1$
and $f^\wedge|_U=g$.
Since~$g$ is analytic, so is~$f^\wedge$,
and $L(f^\wedge)=L(g)=f$.\\[2.5mm]
If also $f^*\colon G_1\to G_2$ is an analytic homomorphism
with the desired properties,
then $f^*|_V=f^\wedge|_V$
for a sufficiently small
$\alpha$-stable open subgroup $V\sub G_1$
(which a priori might be smaller than~$U$
just used), because $L(f^*)=L(f^\wedge)$.
Hence $f^*=f^\wedge$ by uniqueness
in Lemma~\ref{extloc}.
To complete the proof,
note that $g(U)$ is open and $g\colon U\to g(U)$
is an analytic diffeomorphism
in the preceding construction
if we choose~$U$ sufficiently small,
and hence~$f^\wedge$ is an isomorphism
by Lemma~\ref{extloc}.
\end{proof}
\begin{defn}
Let $(\K,|.|)$ be a complete ultrametric
field of characteristic~$0$.
We then obtain categories
$\mathbb{CLG}_\K$
and $\mathbb{CLA}_\K$, as follows:
\begin{itemize}
\item
The objects of
$\mathbb{CLG}_\K$
are pairs $(G,\alpha)$,
where $G$ is an analytic Lie group
over~$\K$ and $\alpha\colon G\to G$
a contractive,
analytic automorphism.
A morphism $(G_1,\alpha_1)\to (G_2,\alpha_2)$
in $\mathbb{CLG}_\K$
is an analytic homomorphism
$f\colon G_1\to G_2$ such that $\alpha_2\circ f=f\circ\alpha_1$.
\item
The objects of
$\mathbb{CLA}_\K$
are pairs $(\cg,\beta)$,
where $\cg$ is a Lie algebra over~$\K$
and $\beta\colon \cg\to \cg$
a contractive Lie algebra automorphism.
A morphism $(\cg_1 ,\beta_1)\to (\cg_2 ,\beta_2)$
is a Lie algebra homomorphism
$f\colon \cg_1 \to \cg_2$ such that $\beta_2\circ f=f\circ\beta_1$.
\end{itemize}
\end{defn}
We now show:
\begin{thm}
The categories 
$\mathbb{CLG}_\K$
and $\mathbb{CLA}_\K$
are equivalent.
\end{thm}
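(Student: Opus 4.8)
The plan is to show that the tangent functor
$L\colon\mathbb{CLG}_\K\to\mathbb{CLA}_\K$, which sends an object $(G,\alpha)$ to $(L(G),L(\alpha))$ and a morphism $f$ to $L(f)$, is an equivalence of categories, and then invoke the standard criterion that a functor is an equivalence precisely when it is full, faithful and essentially surjective. First I would check that $L$ is a well-defined functor. By {\bf\ref{henceappl}}, $L(\alpha)$ is a contractive Lie algebra automorphism of $L(G)$, so objects go to objects; and if $f$ is a morphism, i.e.\ $\alpha_2\circ f=f\circ\alpha_1$, then applying $L$ and using functoriality of the tangent map gives $L(\alpha_2)\circ L(f)=L(f)\circ L(\alpha_1)$, so $L(f)$ is a morphism in $\mathbb{CLA}_\K$. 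Preservation of composition and identities is immediate from functoriality of $L$.

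Next I would verify faithfulness and fullness, both of which are direct consequences of Lemma~\ref{integr}. For faithfulness, suppose $f,g\colon(G_1,\alpha_1)\to(G_2,\alpha_2)$ are morphisms with $L(f)=L(g)=:\psi$. Then $f$ and $g$ are both analytic homomorphisms intertwining $\alpha_1$ and $\alpha_2$ with differential $\psi$, so the uniqueness assertion of Lemma~\ref{integr} forces $f=g$. For fullness, let $\psi$ be a morphism $(L(G_1),L(\alpha_1))\to(L(G_2),L(\alpha_2))$ in $\mathbb{CLA}_\K$, that is, a Lie algebra homomorphism with $\psi\circ L(\alpha_1)=L(\alpha_2)\circ\psi$. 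Applying Lemma~\ref{integr} with $f:=\psi$ produces an analytic homomorphism $\psi^\wedge\colon G_1\to G_2$ with $L(\psi^\wedge)=\psi$ and $\alpha_2\circ\psi^\wedge=\psi^\wedge\circ\alpha_1$; this $\psi^\wedge$ is a morphism in $\mathbb{CLG}_\K$ with $L(\psi^\wedge)=\psi$, so $L$ is full.

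Finally, for essential surjectivity I would start from an arbitrary object $(\cg,\beta)$ of $\mathbb{CLA}_\K$ and apply Proposition~\ref{gradthm} with $k=\omega$. Since $\car(\K)=0$, condition $(*)$ holds automatically, and the proposition yields an analytic Lie group $G$ together with a contractive analytic automorphism $\alpha$ with $L(\alpha)=\beta$; here $L(G)=\cg$, because the group built in that proof contains an open subgroup arising (via the BCH series) from a Lie group $U$ with $L(U)=\cg$. Thus $L(G,\alpha)=(\cg,\beta)$, so $L$ meets every object, a fortiori every isomorphism class, of $\mathbb{CLA}_\K$. Combining the three properties gives the equivalence. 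I expect no genuine obstacle here: the analytic content — integrating a contractive Lie algebra automorphism and its intertwining morphisms uniquely — has already been carried out in Proposition~\ref{gradthm} and Lemma~\ref{integr}, and the only point deserving a word of care is confirming the equality $L(G)=\cg$ (rather than a mere isomorphism) in the essential-surjectivity step, which the construction supplies.
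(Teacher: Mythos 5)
Your argument is correct and rests on exactly the same two ingredients as the paper's proof, used in the same roles: Proposition~\ref{gradthm} (with $k=\omega$ and condition $(*)$ automatic in characteristic~$0$) to realize every object of $\mathbb{CLA}_\K$, and Lemma~\ref{integr} to realize morphisms and to guarantee their uniqueness. The only difference is one of packaging: the paper constructs an explicit quasi-inverse $Q\colon\mathbb{CLA}_\K\to\mathbb{CLG}_\K$ (choosing, for each object $(\cg,\beta)$, a contraction group supplied by Proposition~\ref{gradthm} together with a fixed identification $\phi_x\colon\cg\to L(G)$, and defining $Q$ on morphisms by $f\mapsto(\phi_{x_2}\circ f\circ\phi_{x_1}^{-1})^\wedge$), and then exhibits natural isomorphisms from $\id$ to $P\circ Q$ and from $\id$ to $Q\circ P$; you instead verify that the tangent functor is full, faithful and essentially surjective and invoke the standard criterion. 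The two routes are interchangeable: yours is slightly more economical because it avoids the naturality checks, while the paper's version has the mild advantage of producing the inverse functor and the comparison isomorphisms explicitly. Your closing remark about literal equality $L(G)=\cg$ versus mere isomorphism is well taken but not actually needed for essential surjectivity; it suffices that the chosen isomorphism $\phi_x$ intertwines $\beta$ with $L(\alpha)$, which the construction in Proposition~\ref{gradthm} (and the paper's normalization $L(\alpha)=\phi_x\circ\beta\circ\phi_x^{-1}$) arranges.
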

\begin{proof}
It is clear that a covariant functor
$P \colon \mathbb{CLG}_\K
\to \mathbb{CLA}_\K$
can be defined via
$P (G,\alpha):=(L(G),L(\alpha))$
on objects and $P(f):=L(f)$ on morphisms
(cf.\ {\bf\ref{henceappl}}).
We now define a covariant
functor $Q\colon \mathbb{CLA}_\K
\to \mathbb{CLG}_\K$.
Given an object $x=(\cg,\beta)$,
we let $Q(x):=(G,\alpha)$ be an
analytic Lie contraction group
such that $L(G)=\cg$ and $L(\alpha)=\beta$,
as constructed in Proposition~\ref{gradthm}.
More precisely, we identify
$\cg$ with~$L(G)$ here by means of a fixed
Lie algebra isomorphism
%
\begin{equation}\label{natiso1}
\phi_x\colon \cg\to L(G)\, ,
\end{equation}
and require that
$L(\alpha)=\phi_x \circ \beta\circ \phi_x^{-1}$.
Given objects $x_j=(\cg_j,\beta_j)$
for $j\in \{1,2\}$
and a morphism $f\colon (\cg_1,\beta_1)\to
(\cg_2,\beta_2)$,
we define a morphism $Q(x_1)\to Q(x_2)$
via
$Q(f):= (\phi_{x_2} \circ f \circ \phi_{x_1}^{-1})^\wedge$,
using notation as in Lemma~\ref{integr}.
Then it is easy to see that~$Q$ is a functor
and that $\phi$ is a natural isomorphism
from $\id$
to $P\circ Q$ (in the sense of \cite[p.\,16]{Mac}).
Furthermore, a natural isomorphism~$\psi$
from $\id$ to $Q\circ P$ can be defined
as follows:\\[2.5mm]
Given an object $y=(G,\alpha)$
in $\mathbb{CLG}_\K$,
we have $P(y)=(L(G),L(\alpha))=:x$
and $Q(P(y))=(G_x,\alpha_x)$,
where $L(\alpha_x)=\phi_x \circ L(\alpha)\circ \phi_x^{-1}$.
By Lemma~\ref{integr},
there exists a unique isomorphism
$\psi_y:=(\phi_x)^\wedge\colon G\to
G_x$ such that $\alpha_x\circ \psi_y=\psi_y\circ\alpha$
and
$L(\psi_y)= \phi_x \colon L(G)\to L(G_x)$.
The naturality is easy to check.\\[2.5mm]
We have shown that the functors
$P$ and~$Q$ define an equivalence
of categories between $\mathbb{CLG}_\K$
and $\mathbb{CLA}_\K$ (in the sense
of \cite[p.\,18]{Mac}).
\end{proof}
\section{Examples and open problems}\label{secexx}
%
%
We start with examples related to Theorem~A.
\begin{example}
Let $\K$ be a local field of positive
characteristic. Since $\K^\times$
is not a torsion group, it does not admit
a contractive $C^1$-automorphism,
by Theorem~A.
In fact, $\K^\times$ does not even
admit a contractive \emph{bicontinuous}
automorphism. To see
this, we assume the existence
of such an automorphism~$\alpha$
and derive a contradiction.
We pick an element $0\not=x\in \K^\times$
such that $|x|\not=1$.
Then $D:=\langle x\rangle$ is an infinite
cyclic group and
discrete in the topology induced by~$\K^\times$.
We let $U\sub \K^\times$ be a compact open subgroup.
Then $\alpha^n(x)\in U$ for some~$n$
and hence $\alpha^n(D)$ is an infinite discrete
subgroup of the compact group~$U$,
which is absurd.
\end{example}
The following example shows
that it is in general not possible
to choose all of the groups~$G_j$ in
a composition series (\ref{seon})
as $C^k_\K$-Lie subgroups of~$G$.
%
%
%
\begin{example}\label{exwithl}
Let $\K:=\F_p(\!(X)\!)$ be the field of formal Laurent
series over a finite field~$\F_p$ with~$p$ elements,
$G:=(\K,+)$ and $\alpha\colon G\to G$, $z\mto X^2z$.
Then $G_1:=\F_p^{(-2\N)} \times \F_p^{2\N_0}\sub
\F_p^{(-\N)}\times \F_p^{\N_0}=\K$
is an $\alpha$-stable closed
subgroup of~$G$.
Furthermore $G_1\isom \F_p^{(-\N)}\times \F_p^{\N_0}$
via $(x_n)_{n\in 2\Z}\mto (x_{2n})_{n\in \Z}$
and $G/G_1\isom \F^{(-2\N+1)}\times \F^{2\N_0+1}$.
As both contraction groups are isomorphic
to $C_p^{(-\N)}\times C_p^{\N_0}$ with the right shift,
they are simple contraction groups
and hence
\[
\one\, \triangleleft \, G_1\, \triangleleft \, G
\]
is a composition series
of closed $\alpha$-stable subgroups
of~$G$.
Let $\one\triangleleft H_1\triangleleft G$
be any such composition series.
We now show that~$H_1$ is not a Lie subgroup.
In fact, $H_1$ is a non-discrete, proper
subgroup of~$G$.
Hence, if $H_1$ would be a Lie subgroup
of~$G$, it would be $1$-dimensional
and hence open in the $1$-dimensional
Lie group~$G$.
Then $G=\bigcup_{n\in \N_0}\alpha^{-n}(H_1)=H_1$,
which is absurd.
\end{example}
In other cases, the groups $G_j$
can be chosen only as $C^k_\bL$-Lie subgroups
for some subfield $\bL\sub \K$.
\begin{example}
Let $\K:=\F_{p^2}(\!(X)\!)$,
$G:=(\K,+)$ and $\alpha\colon G\to G$, $z\mto Xz$.
Then $G_1:=\F_p(\!(X)\!)$
is an $\alpha$-stable closed
subgroup~$G$.
Since both~$G_1$ and $G/G_1$
are isomorphic as contraction groups to
$C_p^{(-\N)}\times C_p^{\N_0}$ with the right shift,
they are simple contraction groups
and hence
$\one\triangleleft G_1\triangleleft G$
is a composition series
of closed $\alpha$-stable subgroups
of~$G$. Here $G_1$ is a $C^\omega_\bL$-Lie subgroup
over $\bL:=\F_p(\!(X)\!)$.
However, neither~$G_1$
nor any other group~$H_1$ in a composition series
$\one\triangleleft H_1\triangleleft G$
of $\alpha$-stable closed subgroups
can be a $C^1_\K$-Lie subgroup,
because~$G$ is $1$-dimensional
over~$\K$, enabling
us to argue as in Example~\ref{exwithl}.
\end{example}
Of course, instead of a composition series
of closed $\alpha$-stable
subgroups,
in the situation of Theorem~A\hspace*{.2mm}
we can consider
a properly ascending series
\[
\one\, =\, G_0\, \triangleleft \, G_1\, \triangleleft \, \cdots\,
\triangleleft \, G_m\, =\, G
\]
of $\alpha$-stable $C^k$-Lie subgroups
$G_j$ of~$G$ which cannot be properly refined
to a series of the same type
(let us call such a series
a \emph{Lie composition series}).
In other words, each factor
$G_j/G_{j-1}$ is a \emph{simple Lie
contraction group of class~$C^k$} in the sense that
it is non-trivial and does not
have a proper, normal, non-trivial
$C^k$-Lie subgroup
stable under the contractive $C^k$-automorphism
induced by~$\alpha$.\\[2.5mm]
We mention that Lie composition series
also exist if $(\K,|.|)$
is a non-locally compact, complete ultrametric field,
because $\dim_\K(G_j)<\dim_\K(G_{j+1})$
holds for the groups in a strictly
ascending Lie series.\\[2.5mm]
As a consequence of Theorem~B,
every simple Lie contraction group $(G,\alpha)$
of class $C^k$ (with $k\geq 2$)
over a complete ultrametric field
is abelian.
If $\car(\K)=0$ and $k=\omega$,
this easily implies that $G$
is isomorphic to $(\K^n,+)$ for some~$n$
and $\alpha$ corresponds to a $\K$-linear
automorphism.
If $\K$ has positive characteristic,
then currently we cannot say more.
%
\begin{problem}\label{probreuse}
Is it possible to classify
all simple Lie contraction groups
over complete ultrametric fields
of positive characteristic, or at least
over local fields
of positive characteristic\,?
\end{problem}
The following example shows
that simple Lie contraction groups
need not be $1$-dimensional --
each given dimension $n\in \N$ can occur.
\begin{example}\label{fromGeorge}
Let $\K:=\F_p(\!(X)\!)$,
$G:=(\K^2,+)$ and $\alpha\colon G\to G$
be defined via $(x,y)\mto (Xy,x)$.
Then the map $\phi\colon G\to \K=\F_p^{(-\N)}\times \F_p^{\N_0}$,
\[
\left({\textstyle \sum_k} a_kX^k,{\textstyle \sum_k} b_kX^k\right
)\, \mto \, {\textstyle \sum_k} a_k X^{2k}+{\textstyle \sum_k}
b_kX^{2k+1}
\]
is an isomorphism of topological groups
and $\phi\circ \alpha\circ \phi^{-1}$ is the right
shift on $\F_p^{(-\N)}\times \F_p^{\N_0}$.
Hence $(G,\alpha)$ is a simple
contraction group. As a Lie group,
$G$ is $2$-dimensional.\\[2.5mm]
Analogous arguments show
that the $n$-dimensional $\K$-analytic Lie group~$\K^n$,
together with $\alpha\colon \K^n\to\K^n$,
$\alpha(x_1,\ldots, x_n):=(X x_n,x_1,\ldots, x_{n-1})$,
is isomorphic
to $\F_p^{(-\N)}\times \F_p^{\N_0}$
and hence is a simple contraction group.
\end{example}
While Theorem~A\hspace*{.3mm} settles the locally compact case,
the following problem remains unsolved:
\begin{problem}
Is it true that all Lie contraction groups over
a non-locally compact, complete
ultrametric field $(\K,|.|)$
of positive characteristic are\linebreak
torsion groups\,?
\end{problem}
It would be enough to prove this for all
simple Lie contraction groups over~$\K$.\\[2.5mm]
Let us close this section with material
concerning Section~\ref{seclocglob}.
The following example shows that,
in the case of positive
characteristic,
analytic contraction groups
need not be determined
by the Lie algebra and its automorphism.
Not even the local structure of the group
is determined.
%
\begin{example}\label{ex221}
Let $\F$ be a finite field
and $\K:=\F(\!(X)\!)$.
We set $p:=\car(\F)$
and consider the $3$-dimensional
$\K$-analytic Lie group
$G:=\K^2\semid_\beta \K$,
where $\beta\colon \K\to \Aut(\K^2)$,
$z\mto \beta_z$ is the homomorphism
given by
\[
\beta_z(x,y)\; :=\; (x+z^py,y)\quad
\mbox{for $\, x,y,z\in \K$.}
\]
Then the map
\[
\alpha\colon G\to G\,,\quad
(x,y,z)\mto (X^{p+1}x,Xy,Xz)\quad
\mbox{for $x,y,z\in \K$}
\]
is a contractive
automorphism of~$G$.
Given $g=(x,y,z)$ and $h=(a,b,c)$ in~$G$,
we have $f(g,h):=ghg^{-1}h^{-1}=(z^pb-c^py,0,0)$.
Since $|z^pb|=O(\|g\|^2)O(\|h\|)$ as $(g,h)\to (0,0)$
and $|c^py|=O(\|h\|^2)O(\|g\|)$, it follows that
%
\begin{equation}\label{cfwith}
f(g,h)\; =\; o(\|(g,h)\|^2)
\end{equation}
(using Landau's big~$O$ and small~$o$-notation).
The
second
order Taylor expansion of
the commutator map $f$ of the $C^\omega$-Lie group~$G$
around $(0,0)$ reads
\[
f(g,h)\; =\; [g,h] + o(\|(g,h)\|^2)
\]
(cf.\ item~5 in \cite[Part~II, Chapter~IV, \S7]{Ser}).
Comparing with (\ref{cfwith}),
we deduce that $[g,h]=0$ for all $g,h\in \K^3$.
Thus $L(G)=\K^3$ is an abelian
Lie algebra.
Also, $L(\alpha)$ is the linear map
$\gamma\colon \K^3\to\K^3$, $(x,y,z)\mto (X^{p+1}x,Xy,Xz)$.
Now $(\K^3,+)$ is a $3$-dimensional
$\K$-analytic Lie group
admitting~$\gamma$ as a contractive
$\K$-analytic automorphism.
We have $L(\K^3)=L(G)$ and $L(\gamma)=L(\alpha)$.
However, $(\K^3,+)$ is abelian
while~$G$ is not
(and in fact $G$ does not even have
an abelian open subgroup).
\end{example}
One would not expect a positive solution to
the following existence question,
but the authors currently do not know
counterexamples.
\begin{problem}
If $\cg$ is a Lie algebra
over a local (or complete ultrametric) field~$\K$
of positive characteristic and
$\beta\colon \cg\to\cg$ a contractive Lie algebra
automorphism, can we always find
an analytic (or at least~$C^k$) Lie group~$G$
and an analytic (or $C^k$) contractive automorphism
$\alpha\colon G\to G$ such that
$\cg=L(G)$ and $\beta=L(\alpha)$\,?
\end{problem}
{\footnotesize
{\bf Helge Gl\"{o}ckner},
TU Darmstadt, Fachbereich Mathematik AG~5,
Schlossgartenstr.\,7,
64289 Darmstadt, Germany.
\,E-Mail: {\tt gloeckner\at{}mathematik.tu-darmstadt.de}}
\end{document}